\newtheorem{theorem}{Theorem}
\newtheorem{lemma}[theorem]{Lemma}
\newtheorem{corollary}[theorem]{Corollary}
\newtheorem{observation}[theorem]{Observation}
\newproof{proof}{Proof}
\begin{document}

\title{New Facets of the QAP-Polytope}

\author[iitk]{Pawan Aurora}
\ead{paurora@cse.iitk.ac.in}
\author[iitk]{Shashank K Mehta}
\ead{skmehta@cse.iitk.ac.in}

\address[iitk]{Indian Institute of Technology, Kanpur - 208016, India}

\begin{abstract}
The Birkhoff polytope is defined to be the convex hull of permutation matrices, 
$P_{\sigma}\ \forall \sigma\in S_n$. We define a second-order permutation matrix 
$P^{[2]}_{\sigma}$ in $\mathbb{R}^{n^2\times n^2}$ corresponding to a permutation $\sigma$ as
$(P^{[2]}_{\sigma})_{ij,kl} = (P_{\sigma})_{ij}(P_{\sigma})_{kl}$. We call the convex hull 
of the second-order permutation matrices, the {\em second-order Birkhoff polytope} and
denote it by ${\cal B}^{[2]}$. It can be seen that ${\cal B}^{[2]}$ is isomorphic to the
QAP-polytope, the domain of optimization in {\em quadratic assignment problem}.
In this work we revisit the polyhedral combinatorics of the QAP-polytope viewing it as
${\cal B}^{[2]}$. Our main contribution is the identification of an exponentially 
large set of new facets of this polytope. Also we present a general inequality of which all the known 
facets of this polytope as well as the new ones, that we present in this paper, are special instances.
We also establish the existence of more facets which are yet to be identified.
\end{abstract}

\begin{keyword}
Polyhedral Combinatorics \sep Quadratic Assignment Problem
\end{keyword}

\maketitle

\section{Introduction}
The Birkhoff polytope is defined to be the convex hull of the permutation matrices, 
$P_{\sigma}\ \forall \sigma\in S_n$. 
We define a second-order permutation matrix $P^{[2]}_{\sigma}$ corresponding to a permutation $\sigma$ as
$(P^{[2]}_{\sigma})_{ij,kl} = (P_{\sigma})_{ij}(P_{\sigma})_{kl}$. We call the convex hull 
of the second-order permutation matrices, the {\em second-order Birkhoff polytope} ${\cal B}^{[2]}$.

Clearly, the vertices of the second order Birkhoff polytope are vertices of the unit
cube, $\{0,1\}^{n^2\times n^2}$. Such polytopes are called zero-one polytopes.

Among various definitions of the Quadratic Assignment problem (QAP), see \cite{KaibelPhD97}, one is 
$min\{\sum_{ij,kl}(A_{ik}B_{jl}+D_{ij,kl})$ $Y_{ij,kl}| Y\in {\cal B}^{[2]}\}$ \cite{PovhR2009} where $A,B$ are 
input matrices and $D$ is a diagonal matrix. This may also be stated as $min\{\langle (A\otimes B + D), Y\rangle|$
$Y\in {\cal B}^{[2]}\}$. Thus QAP is an optimization problem over ${\cal B}^{[2]}$.
%, here we have introduced the notation ${\cal B}^{[2]}$. 
In the literature \cite{KaibelPhD97} this polytope is referred to as QAP-polytope.

${\cal B}^{[2]}$ is a zero-one polytope as is the Birkhoff polytope. But unlike the latter which has only
$n^2$ facets, ${\cal B}^{[2]}$ has exponentially many known facets \cite{JungerK97,KaibelPhD97} and 
exponentially many additional facets are identified in this paper. 
%We also prove that there are more facets which are yet to be discovered.

We will identify a generic inequality such that all the previously known facets and the new
facets discovered in this paper are special instances of this inequality. We will also show that
${\cal B}^{[2]}$ must have some facets which are not the instances of this inequality. Which implies that
more facets are yet to be discovered.

\section{A non-linear description of ${\cal B}^{[2]}$}
Consider the {\em completely positive} program CP given below. Note that the variable matrix is required to be 
completely positive (constraint \ref{1.1}), a stronger condition compared to positive semidefiniteness.
In CP $Y$ is a $(n^2+1)\times (n^2+1)$ matrix of variables with index set $(([n]\times [n])\cup \{w\})
\times (([n]\times [n])\cup \{w\})$. Since $Y$ is also positive semidefinite, there exist vectors $u_{ij}$ 
for all $ij$ and $\omega$ in $\mathbb{R}^{n^2+1}$ such that $Y_{ij,kl}=u_{ij}^T\cdot u_{kl}$ and 
$Y_{ij,w}=u_{ij}^T\cdot \omega$. We will refer to these vectors as the vector realization of $Y$. 
%Note that it exists even if $Y$ is a semidefinite matrix.

\vspace*{-2mm}
\begin{subequations}
\begin{alignat}{2}
    \text{CP:}\quad\text{max } \: & \sum_{i,j\in [n]}Y_{ij,ij}\ &,\ & \: \text{subject to}  \notag\\
                 \quad & Y\in {\cal C}^* \label{1.1}\\
                       & Y_{ij,ik}=0\ &,\ & 1\leq i,j,k\leq n,\ j\neq k \label{1.2}\\
                       & Y_{ji,ki}=0\ &,\ & 1\leq i,j,k\leq n,\ j\neq k \label{1.3}\\
                       & Y_{\omega,\omega}=1 \  \label{1.4}\\
                       & Y_{ij,\omega}=Y_{ij,ij}\ &,\ & 1\leq i,j\leq n \label{1.5}
  \end{alignat}
\end{subequations}

\subsection{United Vectors}

Let $\omega$ be any fixed unit vector in $\mathbb{R}^n$. Then for every unit vector $v$, we call $u=(\omega+v)/2$
a {\em united vector with respect to $\omega$}.

\begin{observation}\label{obs1} With respect to a fixed unit vector $\omega$,\\
(i) a vector $u$ is united if and only if $u\cdot \omega = u^2$,\\
(ii) if $u_1$ and $u_2$ are mutually orthogonal united vectors,
then $u_1+u_2$ is also a united vector.\\
(iii) let $u_1,\dots,u_k$ be a set of pairwise orthogonal united vectors.
This set is maximal (i.e., no new united vector
can be added to it while preserving pairwise orthogonality) if and only if $\omega$ belongs
to the subspace spanned by these vectors if and only if $\sum_i u_i = \omega$ if and only if
$\sum_i u_i^2 = 1$.
\end{observation}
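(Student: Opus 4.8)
The plan is to prove Observation~\ref{obs1} part by part, establishing (i) first since the other two parts will lean on it.

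For part (i), the defining relation is $u=(\omega+v)/2$ for some unit vector $v$. I would start from this and compute both $u\cdot\omega$ and $u^2=u\cdot u$ directly. Using $\omega^2=1$, we get $u\cdot\omega=(\omega^2+v\cdot\omega)/2=(1+v\cdot\omega)/2$, while $u^2=(\omega^2+2v\cdot\omega+v^2)/4=(1+2v\cdot\omega+1)/4=(1+v\cdot\omega)/2$, since $v^2=1$. These coincide, giving the forward direction. For the converse, suppose $u\cdot\omega=u^2$ and set $v=2u-\omega$; I must check $v$ is a unit vector. Compute $v^2=4u^2-4u\cdot\omega+\omega^2=4u^2-4u^2+1=1$, using the hypothesis $u\cdot\omega=u^2$. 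Hence $v$ is a unit vector and $u=(\omega+v)/2$ is united. This is clean and is the characterization I will reuse.

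For part (ii), I would use the characterization from (i): $u_1+u_2$ is united iff $(u_1+u_2)\cdot\omega=(u_1+u_2)^2$. The left side expands as $u_1\cdot\omega+u_2\cdot\omega$, which equals $u_1^2+u_2^2$ by (i). The right side is $u_1^2+2u_1\cdot u_2+u_2^2=u_1^2+u_2^2$ because the cross term vanishes by orthogonality. The two sides agree, so $u_1+u_2$ is united. Note this also implicitly uses that a sum of united vectors stays in the relevant picture; the only real input is orthogonality to kill the cross term.

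For part (iii), there is a chain of equivalences to establish, so I would prove it as a cycle of implications rather than four separate biconditionals. A natural route: first show $\sum_i u_i^2=1\iff\sum_i u_i=\omega$; then $\sum_i u_i=\omega\implies\omega\in\mathrm{span}\{u_i\}$ trivially; then $\omega\in\mathrm{span}\{u_i\}\implies$ maximality; and finally maximality $\implies\sum_i u_i^2=1$ to close the loop. The step I expect to be the main obstacle is the maximality criterion, specifically maximality $\implies\sum_i u_i^2=1$ (equivalently, the contrapositive). The idea is that if $\sum_i u_i\neq\omega$, then the residual $r=\omega-\sum_i u_i$ is nonzero and orthogonal to each $u_j$ (since by (i) $u_j\cdot\omega=u_j^2$ and $u_j\cdot u_i=0$ for $i\neq j$, giving $u_j\cdot r=u_j\cdot\omega-u_j^2=0$). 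I would then verify that $u_{k+1}=(\omega+r/|r|)/2$, or more carefully a suitable scaling producing a united vector orthogonal to all existing $u_i$, can be appended, contradicting maximality; the delicate point is confirming the new candidate is genuinely united and orthogonal to the existing set, which follows from $r\perp u_i$ and the characterization in (i). Showing $\sum_i u_i=\omega\implies\sum_i u_i^2=1$ then follows by dotting $\sum_i u_i=\omega$ with $\omega$ and invoking (i) termwise, and the reverse implication uses that $\big(\omega-\sum_i u_i\big)^2=1-2\sum_i u_i\cdot\omega+\sum_i u_i^2=1-2\sum_i u_i^2+\sum_i u_i^2$ vanishes exactly when $\sum_i u_i^2=1$, forcing $\sum_i u_i=\omega$.
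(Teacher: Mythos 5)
The paper states Observation~\ref{obs1} without any proof (it is offered as self-evident), so there is no official argument to compare against; judging your proposal on its merits: parts (i) and (ii) are correct and cleanly done. The forward and backward computations in (i) check out (including the degenerate case $u=0$, where $v=-\omega$), and reusing the characterization $u\cdot\omega=u^2$ to prove (ii) is exactly the right move. In (iii), your cycle-of-implications structure is sound, and your proof of $\sum_i u_i=\omega \iff \sum_i u_i^2=1$ is correct: with pairwise orthogonality, $\bigl(\omega-\sum_i u_i\bigr)^2 = 1-\sum_i u_i^2$, so the residual vanishes precisely when $\sum_i u_i^2=1$.

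There is, however, a concrete error in the maximality step. Your candidate $u_{k+1}=(\omega+r/|r|)/2$ is \emph{not} orthogonal to the existing vectors: $u_{k+1}\cdot u_j=\tfrac12\bigl(\omega\cdot u_j+ (r\cdot u_j)/|r|\bigr)=\tfrac12 u_j^2\neq 0$ whenever $u_j\neq 0$, and no rescaling of this vector can repair that, since the offending term scales but never vanishes. The correct candidate is simpler: take $u_{k+1}=r=\omega-\sum_i u_i$ itself. You already showed $r\perp u_j$ for all $j$; moreover $r\cdot\omega = 1-\sum_i u_i^2 = r^2$, so $r$ is united by (i), and $r\neq 0$ by assumption, contradicting maximality. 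Separately, you assert $\omega\in\mathrm{span}\{u_i\}\Rightarrow$ maximality with no argument; it needs the one-liner: writing $\omega=\sum_i c_i u_i$, any united $u'$ orthogonal to every $u_i$ satisfies $u'^2=u'\cdot\omega=\sum_i c_i\, u'\cdot u_i=0$, hence $u'=0$. (This last computation also exposes a caveat that both you and the paper leave implicit: the zero vector is united under the paper's definition, so ``maximal'' must be read as ``no new \emph{nonzero} united vector can be added.'')
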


Consider CP in the light of united vectors. 
Let $Y$ be a solution with $Y_{ij,kl}=u^T_{ij}.u_{kl}$ for all $ijkl$.
 and $Y_{ij,\omega} = u_{ij}^T.\omega$. 
This is a $(n^2+1)\times (n^2+1)$ matrix in which last row and the last column are same as the diagonal
because $u_{ij}^T.\omega = u_{ij}^2$. In our subsequent discussion we will treat it as an $n^2\times n^2$ 
matrix by dropping the last row and the last column.
Equations \ref{1.4} and \ref{1.5} imply that $u_{ij}$ are united vectors. Equations \ref{1.2} and \ref{1.3}
imply that $\{u_{i1},\dots,u_{in}\}$ are orthogonal sets and so are $\{u_{1i},\dots,u_{ni}\}$. 
From the Observation \ref{obs1} we know that each of these sets add up to a vector of length at most $1$. 
Hence the objective function can evaluate to at most $n$. Note that in arriving at the upper bound we
did not make use of the fact that $Y$ is completely positive. Hence the same bound also holds for the
positive semidefinite relaxation of CP (call it SDP).

Let ${\cal L}$ be the feasible region of CP where the objective function attains its maximum value, $n$.
Observe that $P^{[2]}_{\sigma}\in{\cal L}$ $\forall$ $\sigma\in S_n$, since these are completely positive
rank-1 matrices. 
Clearly ${\cal B}^{[2]}\subseteq{\cal L}$. In fact the converse is also true as the following lemma (proved in
\ref{CPFeasPrf}) shows. 

\begin{lemma}\label{cpfeas}
${\cal L}\subseteq{\cal B}^{[2]}$.
\end{lemma}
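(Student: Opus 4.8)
The plan is to exploit the complete positivity of $Y$ directly, since this is the one feature that separates the cone ${\cal C}^*$ from the mere PSD cone and is what the whole argument will hinge on. Take any $Y\in{\cal L}$. Because $Y\in{\cal C}^*$, it admits an \emph{entrywise-nonnegative} vector realization: there are vectors $u_{ij}\ge 0$ (for all $ij$) and $\omega\ge 0$ in some $\mathbb{R}^r$ with $Y_{ij,kl}=u_{ij}^T u_{kl}$, $Y_{ij,\omega}=u_{ij}^T\omega$ and $Y_{\omega,\omega}=\omega^T\omega=1$. I will write $u_{ij}^{(m)}\ge 0$ and $\omega^{(m)}\ge 0$ for their coordinates, $m=1,\dots,r$.

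First I would record what optimality buys us. As noted in the discussion preceding the lemma, for each fixed $i$ the set $\{u_{i1},\dots,u_{in}\}$ consists of pairwise-orthogonal united vectors, and likewise each column set $\{u_{1j},\dots,u_{nj}\}$. The objective equals $\sum_{ij}\|u_{ij}\|^2=\sum_i\bigl(\sum_j\|u_{ij}\|^2\bigr)$, and each inner sum is at most $1$ by Observation~\ref{obs1}(iii); since the objective attains its maximum $n$, every inner sum equals $1$. Invoking Observation~\ref{obs1}(iii) once more, each such orthogonal set is then maximal, giving $\sum_j u_{ij}=\omega$ for every $i$ and, symmetrically, $\sum_i u_{ij}=\omega$ for every $j$.

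The heart of the proof is a coordinatewise decoding. Fix a coordinate $m$. Row-orthogonality $u_{ij}^T u_{ik}=0$ for $j\neq k$, together with nonnegativity, forces $u_{ij}^{(m)}u_{ik}^{(m)}=0$, so within block $m$ at most one entry per row $i$ is nonzero; the column constraints give at most one nonzero entry per column $j$. Now the row-sum identity $\sum_j u_{ij}^{(m)}=\omega^{(m)}$ shows that whenever $\omega^{(m)}>0$ each row has \emph{exactly} one nonzero entry, equal to $\omega^{(m)}$, and the column-sum identity does the same for columns. Hence the support $\{(i,j):u_{ij}^{(m)}>0\}$ is the graph of a genuine permutation $\sigma_m\in S_n$, with $u_{ij}^{(m)}=\omega^{(m)}(P_{\sigma_m})_{ij}$ (the block vanishes entirely when $\omega^{(m)}=0$). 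Reassembling,
\[
Y_{ij,kl}=\sum_m u_{ij}^{(m)}u_{kl}^{(m)}=\sum_m (\omega^{(m)})^2(P_{\sigma_m})_{ij}(P_{\sigma_m})_{kl}=\sum_m (\omega^{(m)})^2(P^{[2]}_{\sigma_m})_{ij,kl},
\]
so $Y=\sum_m (\omega^{(m)})^2 P^{[2]}_{\sigma_m}$, a combination with nonnegative weights summing to $\sum_m(\omega^{(m)})^2=\|\omega\|^2=1$; thus $Y\in{\cal B}^{[2]}$.

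I expect the main obstacle to be the step that upgrades a \emph{partial} permutation pattern to a full permutation inside each coordinate block: orthogonality alone yields only ``at most one nonzero per row and per column,'' and it is precisely the two sum identities $\sum_j u_{ij}=\sum_i u_{ij}=\omega$, available only because the objective is maximized, that fill every row and column so that $\sigma_m$ is a bijection. A secondary point needing care is the legitimacy of selecting a nonnegative realization in the first place, which is exactly where membership in ${\cal C}^*$ rather than the PSD cone is indispensable.
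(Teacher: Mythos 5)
Your proof is correct and follows essentially the same route as the paper's: extract an entrywise-nonnegative realization from complete positivity, use optimality with Observation~\ref{obs1}(iii) to get $\sum_j u_{ij}=\sum_i u_{ij}=\omega$, slice coordinatewise so that nonnegativity plus orthogonality forces each nonzero coordinate block to be $\omega^{(m)}P_{\sigma_m}$, and reassemble $Y=\sum_m(\omega^{(m)})^2P^{[2]}_{\sigma_m}$ as a convex combination. The only cosmetic difference is that the paper phrases the block argument via doubly stochastic matrices, while you argue the exactly-one-nonzero-per-row/column structure directly.
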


Consequence of the above observations is that ${\cal L}={\cal B}^{[2]}$. Hence we 
have a non-linear description of ${\cal B}^{[2]}$. A similar non-linear description 
appears in \cite{PovhR2009}. In the rest of this paper our objective is to develop a linear description for
${\cal B}^{[2]}$.

\section{The Affine Plane of ${\cal B}^{[2]}$}

In this section we will develop a system of equations whose solution is the affine plane of
${\cal B}^{[2]}$, i.e., the affine-hull of all $P^{[2]}_{\sigma}$s. 

\subsection{A System of Equations for the Affine Plane}

Consider a solution $Y$ of CP (or SDP). If the objective function achieves its maximum value for $Y$, then
each set $\{u_{i1},\dots,u_{in}\}$ is a maximal orthogonal set. Similarly each set $\{u_{1i},\dots,u_{ni}\}$ 
is also a maximal orthogonal set. In that case from united vector property $\sum_{i}u_{ij}=\sum_ju_{ij} = \omega$. 
We then have $1 = \omega^T\cdot\omega = \sum_i\omega^T\cdot u_{ij} = \sum_iu_{ij}^T\cdot u_{ij} = \sum_iY_{ij,ij}$.
Similarly $\sum_jY_{ij,ij}=1$. We also have $\sum_kY_{ij,kl} = u_{ij}^T\cdot (\sum_k u_{kl})
=u_{ij}^T\cdot\omega=Y_{ij,ij}$. Similarly $\sum_{l} Y_{ij,kl} = Y_{ij,ij}$. So we have the following linear conditions:
\vspace*{-5mm}

\begin{subequations}
\begin{align}
Y_{ij,kl}-Y_{kl,ij} &= 0 \,&\forall i,j,k,l \label{2.1}\\
Y_{ij,il}=Y_{ji,li} &= 0 \,&\forall i,\forall j\neq l \label{2.2}\\
\sum_{k} Y_{ij,kl}= \sum_{k} Y_{ij,lk}   &=Y_{ij,ij}  \,&\forall i,j,l \label{2.3}\\
\sum_{j}Y_{ij,ij}= \sum_{j}Y_{ji,ji} &= 1 \,&\forall i\label{2.4}
\end{align}
\end{subequations}

It is easy to verify that every $P^{[2]}_{\sigma}$ satisfies these equations. We make a stronger claim in the
following lemma (proved in \ref{apx_p2s-01}).
%The proof of the following lemmas are in \ref{apx_p2s-01} and \ref{affine-plane}.

\begin{lemma}\label{p2s-01}
The only $0/1$ solutions of Equations \ref{2.1}-\ref{2.4} are the $P^{[2]}_{\sigma}$s.
\end{lemma}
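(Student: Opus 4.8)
The plan is to reconstruct a permutation $\sigma$ from the diagonal of $Y$ and then force every off-diagonal entry to agree with $P^{[2]}_{\sigma}$. First I would read the diagonal block $D$ defined by $D_{ij}=Y_{ij,ij}$ as an $n\times n$ matrix. Equation \ref{2.4} says exactly that every row and every column of $D$ sums to $1$, and since $Y$ is $0/1$ the matrix $D$ is a genuine permutation matrix; write $D=P_{\sigma}$ for the corresponding $\sigma\in S_n$, so that $Y_{ij,ij}=1$ iff $\sigma(i)=j$.

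Next I would split into two cases for a fixed first index pair $(i,j)$. If $\sigma(i)\neq j$, then $Y_{ij,ij}=0$, and the identity $\sum_k Y_{ij,kl}=Y_{ij,ij}=0$ from \ref{2.3}, together with nonnegativity of the $0/1$ entries, forces $Y_{ij,kl}=0$ for all $k,l$; this is exactly the zero block of $P^{[2]}_{\sigma}$. If instead $\sigma(i)=j$, I would read the row $M_{kl}:=Y_{ij,kl}$ as an $n\times n$ matrix. The two halves of \ref{2.3} give $\sum_k M_{kl}=1$ and $\sum_k M_{lk}=1$, so $M$ is doubly stochastic with $0/1$ entries, i.e.\ a permutation matrix.

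The hard part will be pinning down \emph{which} permutation $M$ is, and this is where the symmetry equation \ref{2.1} does the real work. Suppose $M_{kl}=Y_{ij,kl}=1$. By \ref{2.1} we get $Y_{kl,ij}=1$. Applying the first identity of \ref{2.3} to the pair $(k,l)$ — summing the first coordinate of the second index pair while holding that pair's second coordinate at $j$ — gives $Y_{kl,kl}=\sum_m Y_{kl,mj}\geq Y_{kl,ij}=1$, hence $Y_{kl,kl}=1$, i.e.\ $\sigma(k)=l$. Thus the support of the permutation matrix $M$ is contained in the graph $\{(k,\sigma(k))\}$ of $\sigma$; since that graph has exactly $n$ cells and $M$ has exactly $n$ ones, $M=P_{\sigma}$.

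Combining the two cases, $Y_{ij,kl}=(P_{\sigma})_{ij}(P_{\sigma})_{kl}=(P^{[2]}_{\sigma})_{ij,kl}$ for all $i,j,k,l$, which is the claim. I do not expect to need \ref{2.2} beyond a consistency check. The only delicate point is the orientation of indices in \ref{2.3}: I must make sure that its two sums genuinely correspond to the row and column sums of $M$, and that the final sum over $m$ really picks up the entry $Y_{kl,ij}$. I would therefore fix the index conventions explicitly before running the argument.
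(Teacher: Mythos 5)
Your proof is correct and follows essentially the same route as the paper's: read the diagonal as a permutation matrix $P_{\sigma}$ via \ref{2.4} and the $0/1$ hypothesis, then force every off-diagonal entry to equal $(P_{\sigma})_{ij}(P_{\sigma})_{kl}$ using the row-sum identities \ref{2.3}. Your version is in fact more careful on the key step: the paper asserts that \ref{2.3} alone yields $Y_{ij,kl}=1$ if and only if $Y_{ij,ij}=Y_{kl,kl}=1$, whereas your doubly-stochastic argument correctly identifies that the symmetry equation \ref{2.1} is genuinely needed there (without it, different nonzero rows could carry different permutations), so you have supplied a detail the paper leaves implicit.
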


The following lemma (proved in \ref{affine-plane}) sums up the main result of this section.

\begin{lemma} \label{affine} The solution plane $P$ of equations \ref{2.1}-\ref{2.4} is the affine plane
spanned by $P^{[2]}_{\sigma}$s, i.e., $P=\{\sum_{\sigma}\alpha_{\sigma}P^{[2]}_{\sigma}|$ $\sum_{\sigma}
\alpha_{\sigma}=1 \}$.
\end{lemma}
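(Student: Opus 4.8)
The plan is to prove the two inclusions $A\subseteq P$ and $P\subseteq A$ separately, where $A:=\{\sum_{\sigma}\alpha_{\sigma}P^{[2]}_{\sigma}\mid \sum_{\sigma}\alpha_{\sigma}=1\}$ denotes the affine hull of the $P^{[2]}_{\sigma}$'s and $P$ is the solution plane of \ref{2.1}--\ref{2.4}.

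The inclusion $A\subseteq P$ is the routine direction. Equations \ref{2.1}--\ref{2.3} are homogeneous linear conditions on the entries of $Y$, while the two equations in \ref{2.4} are affine with right-hand side $1$. Since the paper already observes that each $P^{[2]}_{\sigma}$ satisfies \ref{2.1}--\ref{2.4}, linearity gives, for any $Y=\sum_{\sigma}\alpha_{\sigma}P^{[2]}_{\sigma}$, that each homogeneous condition $L(Y)=\sum_{\sigma}\alpha_{\sigma}L(P^{[2]}_{\sigma})=0$ holds automatically, and each inhomogeneous condition evaluates to $\sum_{\sigma}\alpha_{\sigma}\cdot 1=\sum_{\sigma}\alpha_{\sigma}$, which equals $1$ precisely because we take affine (not arbitrary linear) combinations. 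Hence $A\subseteq P$.

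Everything therefore reduces to the reverse inclusion $P\subseteq A$. Both $A$ and $P$ are affine subspaces and the easy direction shows $A\subseteq P$; consequently it suffices to prove $\dim P \le \dim A$, since equal dimension of nested affine subspaces forces equality. I would compute the two dimensions and match them. For $\dim A$, note that $A=\mathrm{aff}({\cal B}^{[2]})$, so $\dim A$ equals the dimension of the QAP-polytope ${\cal B}^{[2]}$, which I would lower-bound by exhibiting an explicit family of permutations whose second-order matrices are affinely independent. For $\dim P$ I would pass to the direction space (the homogeneous system \ref{2.1}--\ref{2.3} together with the zeroed-out versions of \ref{2.4}) and parametrise a solution by a minimal set of free entries: symmetry \ref{2.1} restricts us to symmetric matrices, \ref{2.2} forces the nonzero pattern to consist only of the diagonal entries $Y_{ij,ij}$ and the ``generic'' entries $Y_{ij,kl}$ with $i\neq k$ and $j\neq l$, the marginal conditions \ref{2.3} express the off-diagonal block $M^{(ij)}_{kl}:=Y_{ij,kl}$ attached to each fixed $(i,j)$ as a Birkhoff-type matrix with all row and column sums equal to the diagonal value $Y_{ij,ij}$, and \ref{2.4} confines the diagonal $X_{ij}:=Y_{ij,ij}$ to the Birkhoff direction space of dimension $(n-1)^{2}$. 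Carrying out this bookkeeping yields an explicit upper bound on $\dim P$.

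The main obstacle is showing that the two counts coincide, i.e.\ that the system \ref{2.1}--\ref{2.4} has exactly the rank needed so that no affine relation among the $P^{[2]}_{\sigma}$ escapes it. This is the delicate rank computation: one must verify both that the listed equations are, up to their obvious dependencies, independent enough to cut $P$ down to the claimed dimension, and that the constructed affinely independent $P^{[2]}_{\sigma}$ attain that same dimension from below. Lemma \ref{p2s-01} and the identity ${\cal L}={\cal B}^{[2]}$ are encouraging evidence that \ref{2.1}--\ref{2.4} describe ${\cal B}^{[2]}$ tightly at integral points, but they do not by themselves settle the fractional (affine) statement; the dimension match is what must be established. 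As an alternative to the dimension count, one could argue constructively, writing an arbitrary $Y\in P$ as an explicit affine combination of second-order permutation matrices by first decomposing the diagonal $X$ in the Birkhoff plane and then lifting this decomposition to the off-diagonal blocks consistently with \ref{2.3}; I expect the consistent lifting step to be the crux of that route.
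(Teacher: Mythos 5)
Your strategy coincides with the paper's own: prove $A\subseteq P$ by linearity of equations \ref{2.1}--\ref{2.4}, then close the gap between nested affine subspaces by a dimension comparison. That architecture is sound, and your easy direction is fine. But what you have written is a plan, not a proof: the two quantities whose equality carries the entire content of the lemma are never computed, and you flag this yourself (``the main obstacle is showing that the two counts coincide''). Concretely, the upper bound on $\dim P$ is the part the paper actually executes, by an explicit block-by-block elimination: split $Y$ into $n\times n$ blocks, use \ref{2.2} to kill the off-diagonal entries of the diagonal blocks, use \ref{2.3} together with symmetry \ref{2.1} to determine each off-diagonal block from the off-diagonal entries of an $(n-1)\times(n-1)$ principal submatrix minus one entry (with a special case for the second-last block of the $(n-2)$-th block-row, where only the upper-triangular entries are needed), and use \ref{2.4} for the diagonal, arriving at the exact count $n!/(2(n-4)!)+(n-1)^2+1$. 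Your sketch of the ``bookkeeping'' stops well short of this; in particular the count of free entries per off-diagonal block, which is where the nontrivial number comes from, is not attempted, and without the explicit number there is nothing to match.

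The second missing piece is the lower bound $\dim A\geq n!/(2(n-4)!)+(n-1)^2+1$. You propose to get it ``by exhibiting an explicit family of permutations whose second-order matrices are affinely independent.'' That is itself a substantial theorem---it is precisely the computation of the dimension of the QAP-polytope---and the paper does not prove it either; it imports it from \cite{KaibelPhD97}. Unless you reproduce that construction or cite the known dimension result, the dimension comparison cannot be closed. (Your alternative ``constructive lifting'' route has the same status: you identify the consistent lifting of a Birkhoff decomposition of the diagonal to the off-diagonal blocks as ``the crux'' but do not carry it out.) So the gap is not in the shape of the argument, which is exactly the paper's, but in the fact that both load-bearing computations are deferred rather than performed.
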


\section{Some Facets of ${\cal B}^{[2]}$}\label{facet}

To develop a linear description of ${\cal B}^{[2]}$, we need the inequalities corresponding to its facets.
The complete linear description will be these inequalities along with equations \ref{2.1}-\ref{2.4}.
In this section we will identify exponentially many new facets of ${\cal B}^{[2]}$, in addition to 
exponentially many already known facets given in \cite{JungerK97,KaibelPhD97}.

We will represent a facet by an inequality $f(x)\geq 0$ which defines the half space that contains the polytope
and the plane $f(x)=0$ contains the facet.

Let $\{\omega\}\cup \{u_{ij}|1\leq i,j\leq n\}$ represent a (united) vector realization of any point 
$Y\in {\cal B}^{[2]}$. Define a vector $A=\sum_{ij}n_{ij}u_{ij}$
for some choice of $n_{ij}\in \mathbb{Z}$ and let $\beta \in \mathbb{Z}$. Consider the following inequality.

\vspace*{-2mm}
\begin{equation}\label{gen-ineq}
(A-(\beta-0.5) \omega)^2 \geq 0.25. 
\end{equation}

The above inequality defines the half space 
$\sum_{ij}(n_{ij}^2-(2\beta-1)n_{ij})$ $Y_{ij,ij} + \sum_{ij\neq kl}n_{ij}n_{kl} Y_{ij,kl} + \beta^2 - \beta \geq 0$.
The united vector realization of $P^{[2]}_{\sigma}$ is $u_{ij}=\omega$ if $\sigma(i)=j$, else $u_{ij}=0$.
It is easy to see that every $P^{[2]}_{\sigma}$, hence every point of ${\cal B}^{[2]}$, satisfies the inequality 
(\ref{gen-ineq}).

If there 
exists a permutation $\sigma$ such that $\sum_{(ij): \sigma(i)=j} n_{ij}$ is either equal to $\beta$ or $\beta-1$, 
then $P^{[2]}_{\sigma}$ satisfies (\ref{gen-ineq}) with equality. In this case the plane 
$(A-(\beta-0.5) \omega)^2 = 0.25$
is a supporting plane of ${\cal B}^{[2]}$ and hence defines a face. We will show that some instances of this 
inequality define facets for ${\cal B}^{[2]}$. Later we will 
also show that all the facets identified in \cite{JungerK97,KaibelPhD97} also belong to the same inequality.

It may be pointed out that another inequality, which can define faces, is $(A-\beta\omega)^2\geq 0$. But no known
facets correspond to this inequality.

We will discuss the following three sets of inequalities:\\
1. $(u_{ij}+u_{kl} -0.5\omega)^2\geq 0.25$,\\
2. $(u_{p_1q_1}+u_{p_2q_2}+u_{p_1q_2}-u_{kl}-0.5\omega)^2\geq 0.25$,\\
3. $(u_{i_1j_1}+\dots + u_{i_mj_m}-u_{kl}-0.5\omega)^2\geq 0.25$,
and show that each instance of each of these inequalities defines a facet of ${\cal B}^{[2]}$.

The following lemma gives a method to establish a facet.

Let $X$ be a set of vectors. Then $LS(X)$ denotes the subspace spanned by the vectors of $X$.

\begin{lemma}\label{st} Let $V$ be the set of vertices of a polytope such that the affine plane
of $V$ does not contain the origin and $f(x)\geq 0$ be a linear inequality satisfied by all the 
vertices. Let $S=\{v\in V|f(v)=0\}$ such that $V\setminus S\neq\emptyset$. And let $v_0\in V\setminus S$ 
such that each vertex in $V$ can be expressed as a linear combination of $\{v_0\}\cup S$. Then $S$ is 
a facet, i.e., $f(x)\geq 0$ defines a facet. 
\end{lemma}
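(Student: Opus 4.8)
The plan is to reduce the claim ``$S$ defines a facet'' to a single dimension count and then to carry out that count through the linear span rather than the affine hull, which is exactly what the origin-avoidance hypothesis is designed to permit. Write $P=\mathrm{conv}(V)$ for the polytope and let $F=\{x\in P: f(x)=0\}$ be the face exposed by the valid inequality. Since $f$ vanishes on $S$ and is strictly positive on $V\setminus S$, the vertices lying in $F$ are precisely those in $S$, so $F=\mathrm{conv}(S)$ and $\dim F=\dim\mathrm{aff}(S)$, where $\mathrm{aff}(\cdot)$ denotes the affine hull. Because $v_0\in V\setminus S$ satisfies $f(v_0)>0$, the face $F$ is proper; hence it suffices to prove $\dim\mathrm{aff}(S)=\dim\mathrm{aff}(V)-1=\dim P-1$.

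The key tool is the elementary fact that for any finite set $W$ whose affine hull avoids the origin one has $\dim\mathrm{aff}(W)=\dim LS(W)-1$. First I would prove this: fixing $w_0\in W$, the affine hull is $w_0+U$ for the direction space $U$, and $0\notin w_0+U$ forces $w_0\notin U$, so $LS(W)=LS(\{w_0\}\cup U)$ has dimension $\dim U+1=\dim\mathrm{aff}(W)+1$. I then apply this to both $V$ and $S$: the affine hull of $S$ lies inside that of $V$, which by hypothesis misses the origin, so both sets satisfy the requirement and the identity converts the target equality into $\dim LS(S)=\dim LS(V)-1$.

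For the lower bound I use the spanning hypothesis directly. It states $V\subseteq LS(\{v_0\}\cup S)$, and since $S\subseteq V$ and $v_0\in V$ the reverse inclusion is automatic, giving $LS(V)=LS(\{v_0\}\cup S)$. Adjoining one vector to a spanning set raises its dimension by at most one, so $\dim LS(V)\le \dim LS(S)+1$, i.e. $\dim\mathrm{aff}(S)\ge \dim\mathrm{aff}(V)-1$. For the matching upper bound I invoke validity: since $f$ is affine and vanishes on $S$ it vanishes on all of $\mathrm{aff}(S)$, so $\mathrm{aff}(S)\subseteq\{x: f(x)=0\}$; as $v_0\in\mathrm{aff}(V)$ but $f(v_0)\neq 0$, the hyperplane $\{f=0\}$ cuts $\mathrm{aff}(V)$ properly and $\dim\mathrm{aff}(S)\le \dim\mathrm{aff}(V)-1$. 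The two bounds give equality, so $F$ is a facet.

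I expect the only real subtlety to be the affine-to-linear dimension translation of the second paragraph: it is where the hypothesis that $\mathrm{aff}(V)$ does not contain the origin is consumed, and getting it right is precisely what lets the easily-checkable linear-combination condition stand in for an affine-independence argument on the vertices. Everything else is routine bookkeeping relating the span of $\{v_0\}\cup S$ to the supporting hyperplane $\{f=0\}$.
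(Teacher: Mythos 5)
Your proof is correct and takes essentially the same route as the paper's: both arguments use the origin-avoidance hypothesis to translate affine dimensions into linear-span dimensions, derive the lower bound $\dim LS(S)\geq \dim LS(V)-1$ from the hypothesis $V\subset LS(\{v_0\}\cup S)$, and derive the matching upper bound from $f(v_0)\neq 0$. The only (cosmetic) difference is that you obtain the upper bound by intersecting $\mathrm{aff}(V)$ with the hyperplane $\{x: f(x)=0\}$, while the paper argues $V\not\subseteq LS(S)$; your phrasing is in fact slightly more careful, since the paper's step implicitly requires $f$ to vanish on all of $LS(S)$, which needs homogeneity of $f$ (or a homogenization remark), whereas yours works verbatim for affine $f$.
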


\proof{Let $d$ denote the dimension of $LS(V)$. So the dimension of the affine plane of $V$ is $d-1$.
Also $V\subset LS(\{v_0\}\cup S)$ so the dimension of $LS(S)$ is at least $d-1$. As the affine plane
of $S$ does not contain the origin, the dimension of the affine plane of $S$ is at least $d-2$. 
Observe that $V$ is not contained in $LS(S)$ since $f(x)$ is non-zero for $x\in V\setminus S$.
We conclude that the dimension of the affine plane of $S$ is exactly one less than that of the 
affine plane of $V$.
$\Box$}

\begin{corollary}\label{stc} Let $G=(V\setminus S,E)$ be a graph with the property that for each $\{u,v\}\in E$,
$u-v \in LS(S)$. If $G$ is connected, then $S$ is a facet.
\end{corollary}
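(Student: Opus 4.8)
The plan is to derive this corollary as an immediate consequence of Lemma \ref{st}. That lemma already isolates the single nontrivial hypothesis I must verify, namely the existence of a vertex $v_0\in V\setminus S$ such that every vertex of $V$ lies in $LS(\{v_0\}\cup S)$. So my entire task reduces to producing such a $v_0$ and checking this spanning condition; the facet conclusion then follows by quoting Lemma \ref{st} verbatim.

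First I would fix an arbitrary vertex $v_0\in V\setminus S$, which exists because $G$ is a connected graph on the (nonempty) vertex set $V\setminus S$. The vertices lying in $S$ are trivially in $LS(\{v_0\}\cup S)$, so the only real work is to show that each remaining vertex $w\in V\setminus S$ also lies in this span. This is exactly where connectivity enters: since $G$ is connected there is a path $v_0=w_0,w_1,\dots,w_m=w$ with each $\{w_{i-1},w_i\}\in E$. The defining property of $G$ gives $w_i-w_{i-1}\in LS(S)$ for every $i$ (using that $LS(S)$ is a subspace and hence closed under sign), and telescoping yields
\[
w-v_0=\sum_{i=1}^m (w_i-w_{i-1})\in LS(S).
\]
Hence $w\in v_0+LS(S)\subseteq LS(\{v_0\}\cup S)$, which is precisely the condition Lemma \ref{st} demands.

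With the spanning condition established for all vertices, Lemma \ref{st} immediately delivers that $S$ is a facet. I do not anticipate a serious obstacle here: the one idea that drives the argument is that edge-differences accumulate into the displacement $w-v_0$, so connectivity is exactly what propagates membership in $LS(S)$ from neighboring vertices to the whole of $V\setminus S$, and the choice of $v_0$ is irrelevant. The only point I would check carefully is that the ambient hypotheses of Lemma \ref{st}, that $V$ is the vertex set of a polytope whose affine plane avoids the origin and that $f(x)\geq 0$ holds on all of $V$, are inherited unchanged in the corollary's setting, so that the lemma genuinely applies.
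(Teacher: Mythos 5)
Your proof is correct and is precisely the intended derivation: the paper states this corollary without proof as an immediate consequence of Lemma \ref{st}, and your telescoping-along-paths argument (edge differences lie in $LS(S)$, so connectivity puts $w-v_0\in LS(S)$ for every $w\in V\setminus S$) is exactly the implicit reasoning, with the lemma then applied verbatim.
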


Let $k_1,k_2,k_3$ be any three integers belonging to $[n]$. Let $\sigma_1,\dots,\sigma_6$ be a set of permutations 
of $S_n$ which have same image for each element of $[n]\setminus \{k_1,k_2,k_3\}$, i.e.,
$\sigma_i(z)=\sigma_j(z)$ for all $z\in [n]\setminus \{k_1,k_2,k_3\}$ for every $i,j\in \{1,\dots,6\}$.
Let images of $k_1,k_2,k_3$ under $\sigma_1,\dots,\sigma_6$ be $(a,b,c),$ $(a,c,b),$ $(b,a,c),$ $(b,c,a),$ $(c,a,b),$
$(c,b,a)$ respectively. Further, suppose $x,y$ be any two elements of $[n]\setminus \{k_1,k_2,k_3\}$. 
Let $\sigma'_i$ be transposition of $\sigma_i$ on indices $x$ and $y$, for each $i=1,\dots,6$.
That is, $\sigma'_i(z)=\sigma_i(z)$ for all $z\in [n]\setminus \{x,y\}$, $\sigma'_i(x)=\sigma_i(y)$,
and $\sigma'_i(y)=\sigma_i(x)$. Let $\Sigma=\{\sigma_1,\dots,\sigma_6,$ $\sigma'_1,\dots,\sigma'_6\}$.
Following is a useful identity.

\begin{lemma}\label{zero} Let $\sigma_1,\dots,\sigma_6,\sigma'_1,\dots,\sigma'_6$ be a set of permutations
as defined above. Then $\sum_{\sigma\in\Sigma}sign(\sigma)P^{[2]}_{\sigma}=0$.
%$P^{[2]}_{\sigma_1}-P^{[2]}_{\sigma_2}-P^{[2]}_{\sigma_3}+P^{[2]}_{\sigma_4}
%+P^{[2]}_{\sigma_5}-P^{[2]}_{\sigma_6}  
%-P^{[2]}_{\sigma'_1}+P^{[2]}_{\sigma'_2}+P^{[2]}_{\sigma'_3}-P^{[2]}_{\sigma'_4}
%-P^{[2]}_{\sigma'_5}+P^{[2]}_{\sigma'_6}=0$.
\end{lemma}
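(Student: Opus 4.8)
The plan is to prove the identity entrywise. Recall that $(P^{[2]}_{\sigma})_{pq,rs}=(P_{\sigma})_{pq}(P_{\sigma})_{rs}$ equals $1$ exactly when $\sigma(p)=q$ and $\sigma(r)=s$, and $0$ otherwise, so the $(pq,rs)$ entry of $P^{[2]}_{\sigma}$ depends only on how $\sigma$ acts on the two indices $p$ and $r$. I would first record the two sign facts that drive everything: since $\sigma'_i$ is obtained from $\sigma_i$ by the extra transposition on $\{x,y\}$, we have $sign(\sigma'_i)=-sign(\sigma_i)$; and a direct check of the six images of $(k_1,k_2,k_3)$ shows that the signs of $\sigma_1,\dots,\sigma_6$ are $+,-,-,+,+,-$, so that $\sum_{i=1}^{6} sign(\sigma_i)=0$.

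The key structural observation is that the twelve permutations in $\Sigma$ have a product form. The action on $K=\{k_1,k_2,k_3\}$ is governed entirely by the index $i\in\{1,\dots,6\}$ and is unaffected by the transposition on $\{x,y\}$ (which is disjoint from $K$), while the action on $\{x,y\}$ is governed entirely by whether we take $\sigma_i$ or $\sigma'_i$; on all remaining indices the twelve permutations agree. Writing a generic member of $\Sigma$ as a pair $(i,\varepsilon)$ with $\varepsilon\in\{0,1\}$ encoding unprimed/primed, the sign factors as $sign(\sigma_i)\cdot(-1)^{\varepsilon}$. Because each index $m$ has $\sigma(m)$ determined by $i$ alone (if $m\in K$), by $\varepsilon$ alone (if $m\in\{x,y\}$), or by neither (otherwise), the $(pq,rs)$ entry factors as a product $G(i)\,H(\varepsilon)$ of an $i$-part and an $\varepsilon$-part. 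Consequently the corresponding entry of $\sum_{\sigma\in\Sigma}sign(\sigma)P^{[2]}_{\sigma}$ factors as
$$\Bigl(\sum_{i=1}^{6} sign(\sigma_i)\,G(i)\Bigr)\Bigl(\sum_{\varepsilon\in\{0,1\}}(-1)^{\varepsilon}H(\varepsilon)\Bigr),$$
and it suffices to show that for every choice of $p,q,r,s$ at least one of these two factors vanishes.

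This reduces to a short case analysis on where $p$ and $r$ lie. If neither $p$ nor $r$ is in $\{x,y\}$, then $H$ is constant and the $\varepsilon$-factor equals $H\sum_{\varepsilon}(-1)^{\varepsilon}=0$. If some index lies in $\{x,y\}$ but neither lies in $K$, then $G$ is constant and the $i$-factor equals $G\sum_{i} sign(\sigma_i)=0$. Since $K$ and $\{x,y\}$ are disjoint, the only remaining possibility is that exactly one of $p,r$ lies in $K$ and the other in $\{x,y\}$; here I would invoke the stronger fact that for every $r\in K$ and every target value $s$ one has $\sum_{i=1}^{6} sign(\sigma_i)\,(P_{\sigma_i})_{rs}=0$, because among $\sigma_1,\dots,\sigma_6$ each value in $\{a,b,c\}$ is attained by exactly two of the permutations, and these two carry opposite signs (while any $s\notin\{a,b,c\}$ is attained by none). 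Thus the $i$-factor again vanishes, completing the verification.

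I expect the main, though modest, obstacle to be the bookkeeping in the last case: one must verify that fixing the image of a single element of $K$ leaves a balanced even/odd pair among $\sigma_1,\dots,\sigma_6$, which is the combinatorial heart of the cancellation. The factorization in the second paragraph is what makes this manageable, since it isolates that balancing phenomenon from the transposition on $\{x,y\}$, while the two trivially balanced sums $\sum_{\varepsilon}(-1)^{\varepsilon}=0$ and $\sum_{i} sign(\sigma_i)=0$ dispose of all other cases immediately.
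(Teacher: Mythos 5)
Your proof is correct. There is, however, nothing in the paper to compare it against: the authors state Lemma~\ref{zero} with no proof at all (it is introduced only as ``a useful identity''), so your argument fills a genuine omission rather than paralleling an existing one. The argument itself is sound in every step. The factorization of the $(pq,rs)$ entry of $\sum_{\sigma\in\Sigma}sign(\sigma)P^{[2]}_{\sigma}$ into $\bigl(\sum_{i=1}^{6}sign(\sigma_i)G(i)\bigr)\bigl(\sum_{\varepsilon\in\{0,1\}}(-1)^{\varepsilon}H(\varepsilon)\bigr)$ is legitimate precisely because $K=\{k_1,k_2,k_3\}$ and $\{x,y\}$ are disjoint, all twelve permutations agree outside $K\cup\{x,y\}$, the action on $K$ depends only on $i$, the action on $\{x,y\}$ depends only on $\varepsilon$, and $sign(\sigma_{i,\varepsilon})=sign(\sigma_i)(-1)^{\varepsilon}$. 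Your three cases are exhaustive for the same disjointness reason, and the two easy cases dispose of everything except ``one index in $K$, the other in $\{x,y\}$.'' The balanced-pair fact you invoke there is the determinant row-expansion cancellation: fixing $\sigma(k_j)=v$ for any $j$ and any $v\in\{a,b,c\}$ leaves exactly two of $\sigma_1,\dots,\sigma_6$, which differ by a transposition of the remaining two values and hence carry opposite signs (e.g.\ for $k_1\mapsto a$ these are $\sigma_1,\sigma_2$ with signs $+,-$; the check goes through identically for all nine $(j,v)$ pairs, and values outside $\{a,b,c\}$ are attained by none). One cosmetic remark: your sign pattern $+,-,-,+,+,-$ is computed relative to $sign(\sigma_1)=+$; since the claimed identity is homogeneous in an overall sign, this normalization is harmless.
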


In this section $V$ will denote the set $\{P^{[2]}_{\sigma}|\sigma\in S_n\}$ and $S$ will denote $\{P^{[2]}_{\sigma}|
f(P^{[2]}_{\sigma})=0\}$.

\begin{theorem}\label{nonneg}
The non-negativity constraint $Y_{ij,kl}\geq 0$, wh\-i\-ch is same as $(u_{ij}+u_{kl} -0.5\omega)^2\geq 0.25$,
defines a facet of ${\cal B}^{[2]}$ for every $i,j,k,l$ such that $i\neq k$ and $j\neq l$.
\end{theorem}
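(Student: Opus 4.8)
The plan is to use Corollary \ref{stc}, which reduces the facet-proving task to a connectivity argument. The strategy is: set $f(Y) = Y_{ij,kl}$ for fixed indices with $i\neq k$, $j\neq l$; let $V = \{P^{[2]}_\sigma \mid \sigma \in S_n\}$ and $S = \{P^{[2]}_\sigma \mid f(P^{[2]}_\sigma) = 0\}$. Note that $f(P^{[2]}_\sigma) = (P_\sigma)_{ij}(P_\sigma)_{kl}$, which equals $1$ exactly when $\sigma(i)=j$ and $\sigma(k)=l$ simultaneously, and $0$ otherwise. So $V\setminus S$ consists of precisely those permutations $\sigma$ with $\sigma(i)=j$ and $\sigma(k)=l$, while $S$ contains all the rest. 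Since $i\neq k$ and $j\neq l$, the condition "$\sigma(i)=j$ and $\sigma(k)=l$" is consistent, so $V\setminus S \neq\emptyset$, and clearly $S\neq\emptyset$ as well.

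**Next I would** build the graph $G = (V\setminus S, E)$ required by Corollary \ref{stc}, whose edges connect permutations whose difference $P^{[2]}_{\sigma} - P^{[2]}_{\sigma'}$ lies in $LS(S)$. The natural source of such differences is Lemma \ref{zero}: any signed sum over a $6+6$ family $\Sigma$ of permutations vanishes, which lets me express one $P^{[2]}_\sigma$ as a linear combination of the others in $\Sigma$. The idea is to pick, for two permutations $\sigma,\sigma'\in V\setminus S$ that differ by a small rearrangement, a family $\Sigma$ containing both, all of whose \emph{other} members lie in $S$. Then Lemma \ref{zero} rewrites $P^{[2]}_\sigma - (\pm)P^{[2]}_{\sigma'}$ as a combination of vertices in $S$, placing the difference in $LS(S)$ and thus giving an edge. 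I would draw edges so that any two permutations in $V\setminus S$ — all of which agree on sending $i\mapsto j$ and $k\mapsto l$, and are free elsewhere — are connected by a sequence of such elementary moves among the remaining $n-2$ positions.

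**The main obstacle** will be arranging the families $\Sigma$ so that the difference of two adjacent vertices of $V\setminus S$ is captured while keeping every auxiliary permutation inside $S$. Concretely, when I transpose the images of two free positions $x,y$ (with $x,y\notin\{i,k\}$), I must check that the $3$-cycle triples and the paired transposition on $x,y$ built in Lemma \ref{zero}'s construction can be chosen with $k_1,k_2,k_3$ and $x,y$ avoiding the pinned positions $i,k$, so that $\sigma(i)=j,\ \sigma(k)=l$ is preserved only for the two endpoints and violated (forcing membership in $S$) for the other ten permutations. This requires $n$ to be large enough to supply the needed distinct indices; the small-$n$ boundary cases must be handled separately or shown vacuous. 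Since the symmetric group on the $n-2$ free positions is generated by transpositions and is connected under such moves, once each elementary transposition yields an edge, $G$ is connected.

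**Finally**, I would verify the remaining hypothesis of Lemma \ref{st}: that the affine plane of $V$ does not contain the origin. This is immediate since every $P^{[2]}_\sigma$ satisfies $\sum_{ij} Y_{ij,ij} = n$ by Equation \ref{2.4}, a linear equation with nonzero right-hand side, so the origin is excluded from their affine hull. With connectivity of $G$ established and the origin condition checked, Corollary \ref{stc} yields that $S$ is a facet, completing the proof.
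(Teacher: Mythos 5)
Your overall architecture matches the paper's: identify $V\setminus S$ as the permutations with $\sigma(i)=j$ and $\sigma(k)=l$, connect them by transpositions, use Lemma \ref{zero} to place each edge-difference in $LS(S)$, and conclude via Corollary \ref{stc}. However, your resolution of what you yourself call the main obstacle is backwards, and this is precisely the crux of the argument. You propose to choose $k_1,k_2,k_3$ \emph{and} $x,y$ all avoiding the pinned positions $i,k$. But the family $\Sigma$ of Lemma \ref{zero} is built by permuting the images of $k_1,k_2,k_3$ and transposing at $x,y$; if none of these five indices equals $i$ or $k$, then every one of the twelve permutations in $\Sigma$ agrees with $\sigma_1$ at $i$ and at $k$, hence maps $i\mapsto j$ and $k\mapsto l$, hence lies in $V\setminus S$. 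None of the ten auxiliary permutations would be in $S$, so the identity $\sum_{\sigma\in\Sigma}sign(\sigma)P^{[2]}_{\sigma}=0$ would express $P^{[2]}_{\sigma_1}-P^{[2]}_{\sigma'_1}$ as a combination of vertices \emph{outside} $S$, which proves nothing.

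The correct choice --- the one the paper makes --- is the opposite: take $k_1=i$ and $k_2=k$ (the pinned positions themselves), let $x,y$ be the two positions where $\sigma_1$ and $\sigma'_1$ differ, and take $k_3$ to be any index outside $\{i,k,x,y\}$ (which implicitly requires $n\geq 5$). Then $\sigma_1$ and $\sigma'_1$ are the two members of $\Sigma$ whose images at $(k_1,k_2,k_3)$ are in the original arrangement $(j,l,\cdot)$, while each of the other ten permutations has the images of $i,k,k_3$ permuted non-trivially, so it fails $\sigma(i)=j$ or fails $\sigma(k)=l$ and therefore lies in $S$. Since $\sigma_1$ and $\sigma'_1$ differ by a single transposition they carry opposite signs in the identity, which then yields $P^{[2]}_{\sigma_1}-P^{[2]}_{\sigma'_1}\in LS(S)$ as required. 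With this correction, the rest of your argument --- connectivity of the transposition graph on $V\setminus S$ (isomorphic to $S_{n-2}$) and the origin-avoidance check via $\sum_{i,j}Y_{ij,ij}=n\neq 0$ --- goes through exactly as in the paper.
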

\proof{
Observe that the non-negativity condition is satisfied by every $P^{[2]}_{\sigma}$. Every vertex in
the set $V\setminus S$ corresponds to a permutation $\sigma$ where $\sigma(i)=j$ and $\sigma(k)=l$.
Consider a graph $G=(V\setminus S,E)$ where $E={\{P^{[2]}_{\sigma},P^{[2]}_{\sigma'}\}}$ where 
$\sigma'\text{ is a transposition of }\sigma$.
Since the set of permutations corresponding to the vertices in $V\setminus S$ is isomorphic to the group 
$S_{n-2}$, $G$ must be a connected graph.

Let $\sigma_1$ and $\sigma_1'$ be a pair of permutations in $V\setminus S$ which are transpositions of each 
other at indices $x,y$, i.e., $\sigma_1(x)=\sigma'_1(y)$ and $\sigma_1(y)=\sigma'_1(x)$. 
Let $k_1=i,k_2=k$ and $k_3$ be any element
other than $x,y,i,k$. Consider all the permutations $\sigma_2,\dots,\sigma_6,\sigma_2',\dots,\sigma_6'$
as defined in the context of lemma \ref{zero}. Observe that all the $P^{[2]}_{\sigma}$s corresponding to these ten
permutations belong to $S$. Hence we can express $P^{[2]}_{\sigma_1}-P^{[2]}_{\sigma_1'}$ in terms of vertices
in $S$ using the identity of the lemma. From corollary \ref{stc} the inequality defines a facet. $\Box$ }

\begin{lemma}\label{connected} (1) Let $X$ be a set of permutations $\sigma$ such that $\sigma(1)=1,\dots,
\sigma(a)=a$ and $\sigma(a+1)\notin I_1,\dots,\sigma(a+b)\notin I_b$ where 
$I_j$ are subsets of $[n]$ such that $|\cup_iI_i|\leq n-a-b$. Let 
$G=(X,E)$ be a graph in which $\{P^{[2]}_{\sigma},P^{[2]}_{\sigma'}\}\in E$ if $\sigma$ and $\sigma'$ are 
transpositions of each other. Then $G$ is connected.

(2) Let $X$ be a set of permutations $\sigma$ such that $\sigma(1) =1,\dots,\sigma(a)=a, \sigma(a+1)\neq x_1,
\dots,\sigma(a+b)\neq x_b$, where all $x_i$ are distinct and greater than $a$ and $a+b<n$. Let
$G=(X,E)$ be a graph in which $\{P^{[2]}_{\sigma},P^{[2]}_{\sigma'}\}\in E$ if $\sigma$ and $\sigma'$ are 
transpositions of each other. Then $G$ is connected.
\end{lemma}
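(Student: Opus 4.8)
The plan is to translate both statements into the language of bipartite perfect matchings and then prove connectivity by reducing every member of $X$ to a single canonical permutation via transpositions that stay inside $X$. Fixing the values on positions $1,\dots,a$ once and for all, a permutation in $X$ is exactly a perfect matching in the bipartite graph $H$ whose two sides are the positions $\{a+1,\dots,n\}$ and the values $\{a+1,\dots,n\}$, with position $a+j$ joined to every value outside $I_j$ (for $j\le b$) and every free position joined to all values. A transposition of $\sigma$ at two positions is precisely a $2$-swap of the matching along an alternating $4$-cycle, legal iff both displaced values land on admissible positions. I would first record that the hypothesis $|\bigcup_i I_i|\le n-a-b$ is a Hall-type slack condition guaranteeing $H$ has a perfect matching, so $X\ne\emptyset$ (the empty graph being vacuously connected), and fix any $\rho\in X$ as the target.

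The core is a greedy sorting argument: process the positions in the order $a+1,\dots,a+b$ (restricted first), and at each step bring $\rho(p)$ to position $p$ by a transposition, \emph{parking} the value currently at $p$ on a free position so that it never violates a constraint. Because free positions accept every value, the only way to get stuck is if no free slot is available to absorb a displaced value; this is exactly what the count $|\bigcup_i I_i|\le n-a-b$ rules out, since the number of free positions ($n-a-b$) dominates the number of values that are forbidden somewhere. Reaching a permutation with a prescribed admissible value $v_0$ on position $a+1$ in the first place is handled by a short alternating-path (augmenting) move inside $H$, which I would phrase as a separate sub-claim.

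For an inductive presentation of Part~(1) I would instead fix the value on position $a+1$ to a well-chosen admissible $v_0$ and recurse on the instance with one fewer restricted position. The delicate point is preserving the inductive bound: removing position $a+1$ and value $v_0$ changes the required inequality to $|\bigcup_{i\ge 2}(I_i\setminus\{v_0\})|\le n-a-b-1$, and a short case analysis shows this holds whenever $v_0$ can be chosen in $\bigcup_{i\ge 2}I_i\setminus I_1$. \textbf{The main obstacle} is the single extremal configuration where this choice is impossible, namely $I_1=\bigcup_{i\ge 2}I_i$ with $|I_1|=n-a-b$; there every value of $I_1$ is forbidden on \emph{all} restricted positions and must therefore occupy a free position, while the $b$ values outside $I_1$ are admissible on all restricted positions. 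I would treat this tight case directly: the $n-a-b$ forbidden values fill the free positions as an unconstrained copy of $S_{n-a-b}$ (connected by transpositions among free positions alone), and the admissible exchanges between a free and a restricted position let one reach a canonical arrangement, establishing connectivity.

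Part~(2) is the singleton case $I_j=\{x_j\}$ with the weaker hypothesis $a+b<n$, so I would run the same induction on $b$, fixing an admissible value on position $a+1$; a restriction disappears whenever the removed value equals some $x_j$, so the count only improves. The one place requiring care is the boundary where exactly one free position remains ($a+b=n-1$): here I would argue connectivity directly, using that single free position as a buffer to realize the transpositions needed to sort, and I would note that the strict inequality $a+b<n$ is genuinely necessary—when $a=0$, $b=n$ the set $X$ can be the derangements of a $3$-element set, whose two elements differ by a $3$-cycle and admit no legal transposition, so the transposition graph is disconnected.
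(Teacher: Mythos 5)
Your overall strategy --- sort every member of $X$ to a canonical permutation by transpositions, using the free positions as buffers --- is viable and is essentially what the paper does (for part (2) the paper performs exactly this greedy sort with a two-step detour through a free position; for part (1) it avoids detours entirely by relabeling values so that $\cup_i I_i$ lies inside the free block $\{a+b+1,\dots,n\}$, after which a two-phase sort stays in $X$ at every step). But as written your plan has a genuine gap at its central step. The claim that ``the only way to get stuck is if no free slot is available to absorb a displaced value'' misidentifies the obstruction: a displaced value can \emph{always} be absorbed by any free slot, since free positions accept everything. The real danger in the parking move is the reverse flow: to bring $\rho(p)$ from its current position $q=\sigma^{-1}(\rho(p))$ to $p$ via a free position $f$, the occupant $\sigma(f)$ of that free slot must land on the restricted position $q$, and nothing in your sketch rules out $\sigma(f)\in I_{q-a}$ for \emph{every} free $f$. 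What closes this hole is a pigeonhole you never state: the direct swap $p\leftrightarrow q$ failed only because $v=\sigma(p)\in I_{q-a}$, and $v$ occupies a non-free position, so the values of $I_{q-a}$ cannot cover all $n-a-b\geq|\cup_i I_i|\geq|I_{q-a}|$ free positions; hence some free $f$ has $\sigma(f)\notin I_{q-a}$, and the two transpositions $q\leftrightarrow f$ then $p\leftrightarrow f$ are both legal. The same missing argument is needed for your deferred ``alternating-path'' sub-claim that an arbitrary $\sigma\in X$ can be moved into the fiber $\{\tau:\tau(a+1)=v_0\}$; that sub-claim is the heart of the lemma and cannot simply be asserted. (The paper's part (2) faces the same issue and resolves the legality of its detour by injectivity: the forbidden value $x_k$ already sits elsewhere, so it cannot also be the value being displaced.)

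Second, your inductive bookkeeping is off by one, and the ``main obstacle'' it creates is a phantom. Fixing an admissible $v_0$ at position $a+1$ yields an instance with $a'=a+1$ fixed and $b'=b-1$ restricted positions, so the number of free positions is \emph{unchanged}: the inductive hypothesis requires $|\cup_{i\geq 2}(I_i\setminus\{v_0\})|\leq n-a'-b'=n-a-b$, not $n-a-b-1$. This holds trivially for every admissible $v_0$, so no special choice of $v_0\in\cup_{i\geq 2}I_i\setminus I_1$ is needed, and the extremal configuration $I_1=\cup_{i\geq 2}I_i$ with $|I_1|=n-a-b$ requires no separate treatment. The error is in the safe direction, but a substantial part of your argument is devoted to a case that does not exist, while the step that genuinely needs proof (the pigeonhole above) is absent. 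Your closing observation that $a+b<n$ is necessary in part (2) (the derangements of three elements admit no legal transposition) is correct and a nice sanity check, though not required for the lemma.
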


\proof{ (1) Let $I=\cup_iI_i$. Without loss of generality assume that $I=\{a+b+1,a+b+2,
\dots\}$. Hence the identity permutation belongs to $X$. 

Given any permutation $\sigma_0\in X$, we will show that there is a path from $\sigma_0$ to the identity.
Starting from $\sigma_0$, suppose the current permutation $\sigma$ is such that for some $i\in \{a+1,\dots,
a+b\}$, $\sigma(i)\in \{a+b+1,\dots,n\}$. Hence there must exist a $j\in \{a+b+1,a+b+2,\dots, n\}$ such that
$\sigma(j)\in \{a+1,\dots,a+b\}$. 
Performing transposition on $i$ and $j$ we extend the path as the new permutation is also in $X$. Finally 
we will reach a permutation in which all indices in the range $a+1,\dots,a+b$ map to $a+1,\dots,a+b$ and 
hence all indices of $a+b+1,\dots n$ map to $a+b+1,\dots,n$.

Next perform transpositions within indices of $a+1,$ $\dots,$ $a+b$ so that finally $\sigma(i)$ maps to $i$ 
for all $i$ in this range. Note that all the permutations generated in the process belong to $X$. In the 
end we do the same for indices in the range $a+b+1,\dots,n$. 

(2) The claim is vacuously true if $X$ is empty. So we assume that it is non-empty. 
By relabeling we can make sure that $x_i\neq a+i$ for all $1\leq i \leq b$.
So without loss of generality we can assume that the identity permutation belongs to $X$. To prove 
the claim we will show that starting from
any arbitrary permutation $\sigma_0\in X$ there is a path from $\sigma_0$ to the identity permutation.
While tracing this path, the current permutation has $\sigma(a+i)\neq a+i$ while $\sigma(a+j)=a+j$
for all $j<i$. Let $\sigma^{-1}(a+i)=a+k$. 

If $a+k>a+b$ or if $a+k\leq a+b$ \& $\sigma(a+i)\neq x_k$, then perform transposition on indices $a+i$ and $a+k$
resulting into the new permutation $\sigma'$ which belongs to $X$ and is "closer" to the identity.

Now consider the case where $\sigma(a+i)= x_k$. Observe that there must be at least three indices beyond
$a+i-1$. Let $a+j$ be any index greater than $a+b$. Perform transposition on indices $a+j$ and $a+k$
giving $\sigma'$ and then perform transposition on $a+i$ and $a+j$. Let the new permutation be $\sigma''$.
Observe that both, $\sigma'$ and $\sigma''$, belong to $X$. So the path extends by edges $\{\sigma,\sigma'\}$
and $\{\sigma',\sigma''\}$. Further, $\sigma''$ is closer to the identity.

Thus the path eventually reaches the identity and its length is at most $2b$ steps. $\Box$}

\begin{theorem}\label{secondset}
Inequality $Y_{p_1q_1,kl}+Y_{p_2q_2,kl}+Y_{p_1q_2,kl}\leq Y_{kl,kl}+Y_{p_1q_1,p_2q_2}$,
which is same as $(u_{p_1q_1}+u_{p_2q_2}+u_{p_1q_2}-u_{kl}-0.5\omega)^2\geq 0.25$, defines a facet of 
${\cal B}^{[2]}$, where $p_1,p_2,k$ are distinct and $q_1,q_2,l$ are also distinct and $n\geq 6.$
\end{theorem}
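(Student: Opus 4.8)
The plan is to reduce the inequality to a statement about the integer sign-count $S(\sigma)=[\sigma(p_1)=q_1]+[\sigma(p_2)=q_2]+[\sigma(p_1)=q_2]-[\sigma(k)=l]$ attached to each vertex $P^{[2]}_\sigma$, exactly as in the generic inequality (\ref{gen-ineq}) with $\beta=1$. Since $\sigma$ is a map, $[\sigma(p_1)=q_1]$ and $[\sigma(p_1)=q_2]$ cannot both hold, so $S(\sigma)\in\{-1,0,1,2\}$; equality holds precisely when $S\in\{0,1\}$. Hence $V\setminus S$ splits into two families: the \emph{negative} family (those with $S=-1$, i.e. $\sigma(k)=l$ while none of the three positive events occur) and the \emph{positive} family (those with $S=2$, i.e. $\sigma(p_1)=q_1,\sigma(p_2)=q_2,\sigma(k)\neq l$). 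I would first record the elementary but important fact that a single transposition changes $S$ by at most $2$, so no transposition edge can join the two families; this forces the proof away from the clean transposition-graph argument of Theorem \ref{nonneg} and toward showing, via Lemma \ref{st} with a fixed positive-family vertex $v_0$, that every vertex lies in $LS(\{v_0\}\cup S)$.

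Next I would handle the positive family by itself. After fixing $\sigma(p_1)=q_1,\sigma(p_2)=q_2$ and imposing $\sigma(k)\neq l$, Lemma \ref{connected}(2) (with two fixed points and one forbidden value, valid since $n\geq 6$) shows the transposition graph on this family is connected, where the admissible transpositions avoid $p_1,p_2$. For each such edge $\{\sigma,\sigma'\}$ I would certify $P^{[2]}_\sigma-P^{[2]}_{\sigma'}\in LS(S)$ by invoking Lemma \ref{zero} with the triple $\{p_1,p_2,z\}$ for a spare index $z$: a short check confirms that the ten companion permutations all have $S\in\{0,1\}$ and therefore lie on the face, so that Corollary \ref{stc} applies within this family.

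The negative family is subtler, and here I expect the first real difficulty. If one keeps $\sigma(k)=l$ fixed and permutes indices away from $p_1,p_2$, the companions produced by Lemma \ref{zero} tend to stay at the level $S=-1$ and escape the face. The fix I would use is to build the edge-certifying triple so that it contains both $k$ \emph{and} $z:=\sigma^{-1}(q_2)$ together with one of $p_1,p_2$; routing the value $q_2$ through that position forces every companion except the two endpoints to satisfy $S\in\{0,1\}$. This validates transpositions that move the images of $p_1$ and $p_2$ while leaving $k$ and $z$ fixed, and Lemma \ref{connected}(1) then connects every negative-family vertex sharing a fixed preimage $z$ of $q_2$. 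Thus the negative family decomposes into ``sectors'' indexed by $z=\sigma^{-1}(q_2)$, each internally connected in the quotient by $LS(S)$.

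The crux is the bridge joining each sector to the positive family, since the two families cannot be linked by a transposition edge. For this I would construct, for each admissible $z$, a single Lemma \ref{zero} identity with triple $\{p_1,p_2,z\}$ and the transposition placed on $\{k,y_0\}$ for a spare index $y_0$, starting from a base permutation on the face. The point of the construction is to engineer the images so that among the twelve permutations in the identity exactly two fail to lie on the face: one negative-family vertex belonging to sector $z$ and one positive-family vertex. The resulting relation places that sector's representative in $LS(\{v_0\}\cup S)$, and since every sector can be reached this way (the free choice of $z$ and $y_0$ is what the hypothesis $n\geq 6$ guarantees), all of $V\setminus S$ collapses to a single class and Lemma \ref{st} yields the facet. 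The main obstacle throughout is the bookkeeping of which triples and transpositions keep all companion permutations on the face while leaving exactly the desired two off it; verifying this simultaneously for the within-sector edges and for the cross-family bridge is where the care is required.
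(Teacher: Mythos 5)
Your proposal has the same skeleton as the paper's proof: the identical split of $V\setminus S$ into the positive family $X_1$ (where $\sigma(p_1)=q_1$, $\sigma(p_2)=q_2$, $\sigma(k)\neq l$) and the negative family $X_2$ (where $\sigma(k)=l$ and none of the three positive events occurs); the identical treatment of $X_1$ (connectivity from Lemma \ref{connected}(2), certification of edges via Lemma \ref{zero} with triple $\{p_1,p_2,z\}$ for a spare index $z$); and the identical bridge construction --- the paper's special edge $\{P^{[2]}_{\alpha_1},P^{[2]}_{\alpha_2}\}$ is built from the triple $\{p_1,p_2,r\}$ with the transposition on $\{k,\alpha_1^{-1}(l)\}$, and exactly two of the twelve permutations in that identity (one per family) lie off the face, which is precisely your prescription. (Your remark that a transposition changes your count $S(\sigma)$ by at most $2$, so no transposition edge can join the two families, is a cleaner justification than the paper's bare assertion that no such edge exists.) Where you genuinely reorganize is inside $X_2$: the paper keeps $X_2$ as a single component (Lemma \ref{connected}(1), one fixed point and two forbidden values) and runs an exhaustive case analysis over edge types, while you fiber $X_2$ into sectors by $z=\sigma^{-1}(q_2)$, certify only sector-preserving edges with the uniform triples $\{k,z,p_1\}$ or $\{k,z,p_2\}$ (the very triples the paper deploys in its Case (2)), and pay for this with one bridge per sector instead of one bridge overall. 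The sector view does buy something concrete: the paper's problematic edges swapping $\sigma^{-1}(q_1)\leftrightarrow\sigma^{-1}(q_2)$ leave every sector, so they never need to be certified or rerouted in your scheme.

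There is, however, one concrete hole. A within-sector transposition exchanging the images of $p_1$ and $p_2$ (both images outside $\{q_1,q_2\}$) is a legitimate sector edge, but it cannot be certified by your triples: Lemma \ref{zero} requires $x,y\notin\{k_1,k_2,k_3\}$, so no admissible triple can simultaneously contain one of $p_1,p_2$ and be disjoint from $\{x,y\}=\{p_1,p_2\}$. Moreover, Lemma \ref{connected}(1) gives connectivity of the \emph{full} sector transposition graph, not of your certified subgraph, so it cannot be quoted as stated. This is exactly the paper's third subcase ($x=p_1$, $y=p_2$), which it resolves by deleting such edges and exhibiting an alternate path; the same repair works for you: replace the swap by three certified transpositions through a free index $w\notin\{p_1,p_2,k,z\}$ with $\sigma(w)\neq q_1$ (swap with $p_1$, then with $p_2$, then with $p_1$ again), which exists since $n\geq 6$. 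So the gap is real but local and fixable, and with that patch your argument goes through.
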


\proof{The set of vertices which satisfy the inequality strictly is the union of
$X_1=\{P^{[2]}_{\sigma}|\sigma(p_1)=q_1,\sigma(p_2)=q_2,\sigma(k)\neq l\}$ and
$X_2=\{P^{[2]}_{\sigma}|\sigma(p_1)\neq q_1,$ $\sigma(p_1)\neq q_2,$ $\sigma(p_2)\neq q_2,$
$\sigma(k)=l\}$. So $V\setminus S = X_1\cup X_2$.

Define a graph $G=(X_1\cup X_2,E)$ where $E$ is the set of edges $\{P^{[2]}_{\sigma},P^{[2]}_{\sigma'}\}$ where $\sigma 
\text{ is a transposition of }\sigma'$ and both matrices belong to $X_1\cup X_2$.
 From lemma \ref{connected} the subgraphs on $X_1$ and $X_2$ are each connected.
We also notice that there is no edge connecting these components. So we add a special edge
$\{P^{[2]}_{\alpha_1},P^{[2]}_{\alpha_2}\}$
to $G$ making the graph connected. Let $\alpha_1$ be any arbitrary member of $X_1$. Let
$i_2=\alpha_1^{-1}(l)$ and $r$ be any index other than $p_1,p_2,k,i_2$. So $\alpha_1$ maps
$p_1 \rightarrow q_1, p_2\rightarrow q_2, k\rightarrow b, i_2\rightarrow l,r\rightarrow a$ for some $a$ and $b$.
Define $\alpha_2$ to be the permutation which maps
$p_1 \rightarrow a, p_2\rightarrow q_1, k\rightarrow l, i_2\rightarrow b,r\rightarrow q_2$ and in all
other cases images of $\alpha_1$ and $\alpha_2$ coincide. Observe that $P^{[2]}_{\alpha_2}\in X_2$.

Now we will show that for each edge $\{P^{[2]}_{x'},P^{[2]}_{y'}\}$ of the graph,
$P^{[2]}_{x'}-P^{[2]}_{y'}$ belongs to $LS(S)$. We begin with the edge
$\{P^{[2]}_{\alpha_1},P^{[2]}_{\alpha_2}\}$. Let $\sigma_1=\alpha_1$. Define $\sigma_2,\dots,\sigma_6$ using
$k_1=p_1,k_2=p_2,
k_3=r$ as described before lemma \ref{zero}. Taking $x=k$ and $y=i_2$, define $\sigma'_1, \dots, \sigma'_6$.
See that $\alpha_2 = \sigma'_5$. The rest of the permutations are in $S$.
Hence from lemma \ref{zero} $P^{[2]}_{\alpha_1} -
P^{[2]}_{\alpha_2}$ can be expressed as a linear combination of vertices in $S$.

Next we will show that each edge in the graph on $X_1$ has the same property.
Let $\{P^{[2]}_{\sigma_1},P^{[2]}_{\sigma'_1}\}$ be an edge in the graph on $X_1$. In both permutations $p_1$ and $p_2$ map to $q_1$ and
$q_2$ respectively. Define $k_1=p_1$ and $k_2=p_2$. There is at least one index, other than $p_1,p_2,k$, which
has the same image in both the permutations because $n\geq 6$. Label it $k_3$. Consider $5$ new permutations formed
from $\sigma_1$ by permuting the images of $k_1,k_2$ and $k_3$. Call them $\sigma_2,\dots,\sigma_6$. Similarly
define $\sigma'_2,\dots,\sigma'_6$ from $\sigma'_1$. Observe that in each $\sigma_i$ for $i\geq 2$, $k$ does not
map to $l$. In addition either $p_1$ does not map to $q_1$ or $p_2$ does not map to $q_2$. Hence
$P^{[2]}_{\sigma_2},\dots, P^{[2]}_{\sigma_6}$ belong to $S$. Similarly
$P^{[2]}_{\sigma'_2},\dots,P^{[2]}_{\sigma'_6}$ also belong to $S$. From lemma \ref{zero},
$P^{[2]}_{\sigma_1}-P^{[2]}_{\sigma'_1}\in LS(S)$.

Now we consider the edges of $X_2$. Let $\{P^{[2]}_{\sigma_1},P^{[2]}_{\sigma'_1}\}$ be one such edge. Let
$x,y$ be the indices at which $\sigma_1$ and $\sigma'_1$ differ. Consider two cases of $\sigma_1$:
(1) $\sigma_1(p_1)=a, \sigma_1(p_2)=b, \sigma_1(k)= l, \sigma_1(r)=q_1, \sigma_1(s)= q_2$,
(2) $\sigma_1(p_1)= a, \sigma_1(p_2)= q_1, \sigma_1(k)=l, \sigma_1(r)= q_2$.

Case (1) Subcase $|\{p_1,p_2,r,s\}\cap \{x,y\}|\leq 1$:
If $p_1\notin \{x,y\}$, then define $k_1=k$, $k_2=p_1$, and $k_3$ be any index in $\{r,s\}\setminus \{x,y\}$.
Otherwise $k_1=k, k_2=p_2,k_3=s$. All the permutations $\sigma_2,\dots,\sigma_6$ and $\sigma'_2,\dots,\sigma'_6$
as defined before lemma \ref{zero} are in $S$. So $P^{[2]}_{\sigma_1}- P^{[2]}_{\sigma'_1}$ can be expressed as
a linear combination of points in $S$ using the identity.

Subcase $\{x,y\}\subset \{p_1,p_2,r,s\}$: Only three cases are possible here: $x=p_2,y=r$; $x=r,y=s$;
and $x=p_1,y=p_2$, apart from exchanging the roles of $x$ and $y$. In the first case
let $k_1=p_1, k_2=s, k_3=k$ and use lemma \ref{zero}.
The remaining two cases are proven differently.

In these two cases we will not show that $P^{[2]}_{\sigma}-P^{[2]}_{\sigma'}$ can be expressed
as a linear combination of vertices in $S$. Instead, we will delete such edges from $E$ and show that
the reduced graph is still connected. Consider an edge $\{P^{[2]}_{\sigma},P^{[2]}_{\sigma'}\}$ of the second type where
$\sigma$ maps: $p_1\rightarrow a, p_2 \rightarrow b, k\rightarrow l, r\rightarrow q_1, s\rightarrow q_2, 
u\rightarrow v$ and $\sigma'$ maps: $p_1\rightarrow a, p_2 \rightarrow b, k\rightarrow l, r\rightarrow q_2, 
s\rightarrow q_1, u\rightarrow v$. Note that $n\geq 6$ so there always exists the pair $u,v$.
Rest of the indices have the same images in the two permutations. To show that after dropping an
edge of this class the graph remains connected, define two new permutations: $\alpha_1$:
$p_1\rightarrow a, p_2 \rightarrow b, k\rightarrow l, r\rightarrow v, s\rightarrow q_2, u\rightarrow q_1$
and $\alpha_2$: $p_1\rightarrow a, p_2 \rightarrow b, k\rightarrow l, r\rightarrow q_2, s\rightarrow v, 
u\rightarrow q_1$. Other mappings are same as in $\sigma$. Observe that
$\{P^{[2]}_{\sigma},P^{[2]}_{\alpha_1}\}, \{P^{[2]}_{\alpha_1},P^{[2]}_{\alpha_2}\}$ and $\{P^{[2]}_{\alpha_2},
P^{[2]}_{\sigma'}\}$ are edges in the reduced graph, hence there is a path from $P^{[2]}_{\sigma}$ to $P^{[2]}_{\sigma'}$
in it.

Let $\{P^{[2]}_{\sigma},P^{[2]}_{\sigma'}\}$ be third type of edge. So $\sigma$ maps
$p_1 \rightarrow a, p_2 \rightarrow b, k\rightarrow l, r\rightarrow q_1, s\rightarrow q_2$ and $\sigma'$ maps
$p_1\rightarrow b, p_2 \rightarrow a, k\rightarrow l, r\rightarrow q_1, s\rightarrow q_2$. Again to show
 a path from $P^{[2]}_{\sigma}$ to $P^{[2]}_{\sigma'}$ in the reduced graph, define $\alpha_1$:
$p_1\rightarrow a, p_2 \rightarrow q_1, k\rightarrow l, r\rightarrow b, s\rightarrow q_2$ and $\alpha_2$:
$p_1\rightarrow b, p_2 \rightarrow q_1, k\rightarrow l, r\rightarrow a, s\rightarrow q_2$. Other mappings
are same as in $\sigma$. In case (2) we will show that $\{P^{[2]}_{\alpha_1},P^{[2]}_{\alpha_2}\}$
and $\{P^{[2]}_{\alpha_2},P^{[2]}_{\sigma'}\}$ are edges in the reduced graph.
Hence $P^{[2]}_{\sigma},P^{[2]}_{\alpha_1},P^{[2]}_{\alpha_2},P^{[2]}_{\sigma'}$ is a path in the reduced graph.
Note that $\{P^{[2]}_{\sigma},P^{[2]}_{\alpha_1}\}$ is an edge of the first type.

Case (2) Subcase $\{p_1,p_2,r=\sigma^{-1}(q_2)\}\cap \{x,y\}=\emptyset$: In this case define $k_1=p_1,k_2=p_2,k_3=r$.

Subcase $|\{p_1,p_2,r=\sigma^{-1}(q_2)\}\cap \{x,y\}|=1$: If $x=p_1$ or $y=p_1$, then $k_1=p_2, k_2=r,k_3=k$.
If $x=p_2$ or $y=p_2$, then $k_1=p_1,k_2=r,k_3=k$. Finally if $x=r$ or $y=r$, then
$k_1=p_1,k_2=p_2,k_3=k$. In each case lemma \ref{zero} gives a desired linear expression in terms
of points in $S$ for $P^{[2]}_{\sigma_1}-P^{[2]}_{\sigma'_1}$.

Subcase $\{x,y\}\subset \{p_1,p_2,r=\sigma^{-1}(q_2)\}$ does not arise because in this case every transposition leads
to a permutation in $S$.

From Corollary \ref{stc} we conclude that $S$ is a facet. $\Box$
}

Consider the following inequality where $3\leq m\leq n-3$.
\vspace*{-2mm}
\begin{equation}\label{ineqsys}
Y_{i_1j_1,kl}+Y_{i_2j_2,kl}+\ldots+Y_{i_mj_m,kl}\leq Y_{kl,kl}+\sum_{r\neq s}Y_{i_rj_r,i_sj_s}.
\end{equation}
Observe that it can also be written as $(u_{i_1j_1}+\dots + u_{i_mj_m}-u_{kl}-0.5\omega)^2\geq 0.25$, 
where $3\leq m\leq n-3$.  
In the rest of this section we will show that inequality (\ref{ineqsys}) 
also defines a facet of $\mathcal{B}^{[2]}$.

We will continue to use $S$ to denote the set of vertices that satisfy the given inequality with equality. Let $T$ 
denote the set of remaining vertices. For the system (\ref{ineqsys}) the set $T$ can be subdivided into
the following classes:

\begin{enumerate}
\item $T_1: k\rightarrow l,i_1\not\rightarrow j_1,i_2\not\rightarrow j_2,\ldots,i_m\not\rightarrow j_m$.
\item $T_2: k\rightarrow l$ and three or more $i_r\rightarrow j_r$.
\item $T_3: k\not\rightarrow l$ and two or more $i_r\rightarrow j_r$.
\end{enumerate}

In classes $T_2$ and $T_3$ we do further subdivision. If a permutation in $T_2$ maps $i_r$ to $j_r$ for $x$
out of $m$ indices, then such a permutation belongs to subclass denoted by $T_{2,x}$. Similarly $T_{3,x}$
is defined. Observe that $T_2=\cup_{x\geq 3}T_{2,x}$ and $T_3=\cup_{x\geq 2}T_{3,x}$.

\begin{lemma}
Let $m\geq 3$. The graph $G_1$ on $T_1$, with edge set $\{P^{[2]}_{\sigma'},P^{[2]}_{\sigma''}\}$ where
$\sigma'$ is a transposition of $\sigma''$, is connected.
Further the difference vector corresponding to each edge belongs to $LS(S)$.
\end{lemma}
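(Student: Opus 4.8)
The plan is to establish the two assertions separately: connectivity of $G_1$ by reduction to Lemma \ref{connected}, and the span condition by repeated application of the identity in Lemma \ref{zero}. Throughout I use that the defining cells $(i_1,j_1),\dots,(i_m,j_m),(k,l)$ occupy distinct rows and distinct columns.

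For connectivity, recall that $T_1$ is by definition the set of permutations with $\sigma(k)=l$ and $\sigma(i_r)\neq j_r$ for every $r$. Relabelling the domain (right composition by a fixed permutation) and the codomain (left composition) so that $k$ and $l$ are both sent to the first coordinate turns the single equality $\sigma(k)=l$ into the fixed point $\sigma(1)=1$; both relabellings carry transposition-neighbours to transposition-neighbours, so $G_1$ is unchanged. The constraints $\sigma(i_r)\neq j_r$ then become forbidden-value constraints on distinct indices with distinct forbidden values, all different from $1$. Hence with $a=1$ and $b=m$ we have $a+b=m+1\le n-2<n$, and Lemma \ref{connected}(2) applies to give connectivity of $G_1$.

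For the span condition, fix an edge $\{P^{[2]}_{\sigma'},P^{[2]}_{\sigma''}\}$ with $\sigma''=\sigma'\circ(x\,y)$; since both endpoints lie in $T_1$ and fix $k\mapsto l$, necessarily $x,y\neq k$. I would instantiate Lemma \ref{zero} with $\sigma_1=\sigma'$ and this same pair $x,y$, choosing $k_1=i_r$, $k_2=\sigma'^{-1}(j_r)$, $k_3=k$ for a suitable $r$ with $k_1,k_2\notin\{x,y\}$. Writing $\epsilon=1$ when $\sigma(k)=l$ and $\epsilon=0$ otherwise, the base permutation $\sigma_1=\sigma'$ has $\epsilon=1$ and no match $\sigma(i_s)=j_s$; examining the five non-identity rearrangements of the images $(\,\sigma'(k_1),\sigma'(k_2),\sigma'(k_3))=(a,\,j_r,\,l)$ shows that each either sends $k$ off $l$ (so $\epsilon=0$) without creating two matches, or keeps $k\mapsto l$ while routing $i_r\mapsto j_r$ (so $\epsilon=1$ with a single match). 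In every case the number of matches minus $\epsilon$ lies in $\{0,1\}$, i.e. $\sigma_2,\dots,\sigma_6\in S$. Because $x,y$ are disjoint from $\{k_1,k_2,k_3\}$, the transposition $(x\,y)$ swaps the common values $\sigma'(x),\sigma'(y)$ identically for all six base permutations and, as it creates no match for $\sigma'$ (both endpoints being in $T_1$), leaves both the match count and $\epsilon$ unchanged; thus $\sigma'_i\in S\iff\sigma_i\in S$, so $\sigma'_2,\dots,\sigma'_6\in S$ as well. The identity $\sum_{\sigma\in\Sigma}\mathrm{sign}(\sigma)P^{[2]}_{\sigma}=0$ then expresses $P^{[2]}_{\sigma'}-P^{[2]}_{\sigma''}$, the only two terms outside $S$, as a linear combination of vertices in $S$, placing it in $LS(S)$.

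The main obstacle is guaranteeing a workable triple $k_1,k_2,k_3$ in every configuration: the clean choice above needs $\sigma'^{-1}(j_r)\notin\{i_1,\dots,i_m\}$ and $i_r,\sigma'^{-1}(j_r)\notin\{x,y\}$, and one must pass to another value of $r$ or another third index when these clash. This is exactly where the hypotheses enter: $m\ge 3$ supplies several candidate rows $i_r$ to route, while $m\le n-3$ leaves free rows and columns outside the active cells, so that for at least one $r$ all side conditions hold and no rearranged image produces an extra match that would push a vertex into $T_2$ or $T_3$. I would organise the residual verification as a short case analysis on how $\{x,y\}$ meets $\{i_r\}\cup\{\sigma'^{-1}(j_r)\}$, in the spirit of the case split already used in the proof of Theorem \ref{secondset}.
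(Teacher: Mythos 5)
Your strategy coincides with the paper's: connectivity of $G_1$ is reduced to part (2) of Lemma~\ref{connected} exactly as the paper does, and the span condition is attacked by instantiating Lemma~\ref{zero} on the triple $\{i_r,\,(\sigma')^{-1}(j_r),\,k\}$, which is precisely the paper's choice $k_1=k$, $k_2=i_1$, $k_3=\delta=\sigma_1^{-1}(j_1)$ up to reordering. Your verification that the ten auxiliary permutations land in $S$ is sound; in fact your extra side condition $(\sigma')^{-1}(j_r)\notin\{i_1,\dots,i_m\}$ is superfluous, because when $(\sigma')^{-1}(j_r)=i_s$ the affected rearrangements have either $k\mapsto l$ with two matches, or $k\not\mapsto l$ with one match, and both kinds of vertices still lie in $S$; imposing that condition only makes your existence problem harder.

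The genuine gap is the step you explicitly defer to a ``short case analysis'': the existence, for every edge, of an index $r$ with $i_r\notin\{x,y\}$ and $j_r\notin\{\sigma'(x),\sigma'(y)\}$ (equivalently $(\sigma')^{-1}(j_r)\notin\{x,y\}$). This is the crux, and your claim that $m\geq 3$ together with $m\leq n-3$ secures it is false. At most two indices $r$ violate the first condition and at most two violate the second, and these two bad sets are disjoint (if $i_r\in\{x,y\}$, then $T_1$-membership of \emph{both} endpoints forces $j_r\notin\{\sigma'(x),\sigma'(y)\}$); so existence is guaranteed only when $m\geq 5$. For $m=3$ the configuration $i_1=x$, $i_2=y$, $j_3=\sigma'(x)$ is realizable for $n\geq 6$ (all row/column distinctness and $T_1$ constraints can be met), and in it every $r$ is bad. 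Worse, in that configuration no triple of the prescribed shape exists at all: the third index that would pair with $i_3$ is $(\sigma')^{-1}(j_3)=x$, which is forbidden, and any triple avoiding $\{x,y\}$ whose images omit $j_3$ yields rearrangements fixing $k\mapsto l$ with zero matches, i.e.\ vertices in $T_1$ rather than $S$, so Lemma~\ref{zero} no longer expresses the edge difference over $S$ alone. Closing this requires a different device, for instance replacing such an edge by a path of good edges inside $T_1$ and telescoping, in the spirit of the reduced-graph argument in the proof of Theorem~\ref{secondset}. (To be fair, the paper's own proof asserts the same existence claim from $m\geq 3$ with no justification, so it shares this defect; but your write-up singles this out as the main obstacle and then leaves it unresolved, so the proposal as it stands does not constitute a proof.)
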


\proof{The first part of the lemma is a special case of the second part of lemma \ref{connected}.

For the second part let $\{P^{[2]}_{\sigma_1},P^{[2]}_{\sigma'_1}\}$ be an edge in $G_1$ where $\sigma_1(x)=\sigma'_1(y)$ and 
$\sigma_1(y)=\sigma'_1(x)$. As $m$ is at least $3$, there exists $r\leq m$ such that $i_r\notin \{x,y\}$ 
and $j_r\notin \{\sigma_1(x), \sigma_1(y)\}$. Without loss of generality assume that $r=1$. So we have 
description of $\sigma_1$ and $\sigma'_1$ as follows:
$\sigma_1: k\rightarrow l, x\rightarrow \alpha, y\rightarrow \beta, i_1\rightarrow \gamma, \delta 
\rightarrow j_1, \dots$ and $\sigma'_1: k\rightarrow l, x\rightarrow \beta, y\rightarrow \alpha, i_1
\rightarrow \gamma, \delta \rightarrow j_1, \dots$. 

Taking $k_1=k, k_2=i_1, k_3=\delta$, $x$ as $x$ and $y$ as $y$, generate permutations $\sigma_2,\dots,
\sigma_6,\sigma'_2, \dots,\sigma'_6$ as defined before lemma \ref{zero}. Vertices corresponding to each 
of these permutations belong to $S$. Hence from lemma \ref{zero}, $P^{[2]}_{\sigma_1}-P^{[2]}_{\sigma'_1}
\in LS(S)$. $\Box$
}

\begin{corollary}\label{T1}
Given any $P^{[2]}_{\sigma*}$ in $T_1$, each $P^{[2]}_{\sigma}$ in $T_1$ belongs to $LS(\{P^{[2]}_{\sigma*}\}\cup S)$. 
\end{corollary}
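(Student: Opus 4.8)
The final statement to prove is Corollary \ref{T1}, which asserts that given any fixed $P^{[2]}_{\sigma*}$ in $T_1$, every $P^{[2]}_{\sigma}$ in $T_1$ lies in $LS(\{P^{[2]}_{\sigma*}\}\cup S)$.

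The plan is to derive this directly from the lemma immediately preceding it, which established two facts about the graph $G_1$ on $T_1$: first, that $G_1$ is connected, and second, that for every edge $\{P^{[2]}_{\sigma'},P^{[2]}_{\sigma''}\}$ the difference vector $P^{[2]}_{\sigma'}-P^{[2]}_{\sigma''}$ belongs to $LS(S)$. These two facts together are exactly what is needed, so the corollary is essentially a connectivity-plus-telescoping argument.

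The key steps, in order, are as follows. First I would fix the given vertex $P^{[2]}_{\sigma*}\in T_1$ and an arbitrary target vertex $P^{[2]}_{\sigma}\in T_1$. Since $G_1$ is connected by the lemma, there is a path $\sigma* = \tau_0, \tau_1, \ldots, \tau_t = \sigma$ in $G_1$, where each consecutive pair $\{P^{[2]}_{\tau_{p-1}}, P^{[2]}_{\tau_p}\}$ is an edge. Second, I would invoke the second part of the lemma to conclude that each edge difference $P^{[2]}_{\tau_{p-1}} - P^{[2]}_{\tau_p}$ lies in $LS(S)$. Third, I would write the telescoping identity $P^{[2]}_{\sigma} = P^{[2]}_{\sigma*} - \sum_{p=1}^{t}\left(P^{[2]}_{\tau_{p-1}} - P^{[2]}_{\tau_p}\right)$. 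Since each summand is in $LS(S)$ and $LS(S)$ is a subspace, the entire sum lies in $LS(S)$, and therefore $P^{[2]}_{\sigma}$ is expressible as $P^{[2]}_{\sigma*}$ plus a vector of $LS(S)$. This places $P^{[2]}_{\sigma}$ in $LS(\{P^{[2]}_{\sigma*}\}\cup S)$, as required.

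There is no genuine obstacle here, since both nontrivial ingredients are already supplied by the preceding lemma; the only care needed is in the bookkeeping of the telescoping sum and in observing that $LS(S)$ is closed under the linear combinations used. The mild subtlety worth stating explicitly is that connectivity of $G_1$ guarantees the path regardless of which two vertices of $T_1$ we choose, so the claim holds uniformly for the fixed base point $P^{[2]}_{\sigma*}$. I expect the entire proof to be a short paragraph, with the telescoping identity doing all the work.
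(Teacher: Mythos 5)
Your proof is correct and is exactly the argument the paper intends: the corollary is stated without proof as an immediate consequence of the preceding lemma, whose two conclusions (connectivity of $G_1$ and edge differences lying in $LS(S)$) combine via precisely your telescoping identity. Nothing is missing.
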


\begin{lemma}\label{3-2}
Let $n\geq 5$. Then  $T_{3,2}\subset LS(T_1\cup S)$.
\end{lemma}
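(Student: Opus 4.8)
The plan is to prove the stronger statement that each individual vertex $P^{[2]}_{\sigma}$ with $\sigma\in T_{3,2}$ already lies in $LS(T_1\cup S)$, and to obtain this from a single application of Lemma \ref{zero} in which $\sigma$ is made the \emph{only} one of the twelve participating permutations that falls outside $T_1\cup S$. Since the identity of Lemma \ref{zero} can be written $\sum_{i=1}^{6}\mathrm{sign}(\sigma_i)\,(P^{[2]}_{\sigma_i}-P^{[2]}_{\sigma'_i})=0$, and the coefficient of $P^{[2]}_{\sigma}$ in it is $\pm 1$, isolating $P^{[2]}_{\sigma}$ then expresses it as a linear combination of the remaining eleven vertices, all of which will be arranged to lie in $T_1\cup S$.

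Fix $\sigma\in T_{3,2}$ and relabel so that the two satisfied matches are $i_1\rightarrow j_1$ and $i_2\rightarrow j_2$, while $i_r\not\rightarrow j_r$ for $r\geq 3$ and $\sigma(k)\neq l$. Let $p=\sigma^{-1}(l)$; by the non-degeneracy of the inequality ($l\notin\{j_1,\dots,j_m\}$ and $k\notin\{i_1,\dots,i_m\}$) the indices $i_1,i_2,k,p$ are pairwise distinct, and for $n\geq 5$ one can pick a further index $y$ distinct from these four and from the remaining $i_r$. I would then invoke Lemma \ref{zero} with $k_1=i_1$, $k_2=k$, $k_3=p$ and transposition indices $x=i_2$, $y$, taking $\sigma=\sigma_1$. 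The three images being permuted are $(\sigma(i_1),\sigma(k),\sigma(p))=(j_1,b,l)$ with $b=\sigma(k)\neq l$; these are distinct, so the twelve permutations are genuinely distinct vertices and $\sigma_1$ appears with coefficient $+1$.

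The verification to carry out is a short bookkeeping of the quantity $x-\epsilon$, where $x$ counts the satisfied matches among $i_1,\dots,i_m$ and $\epsilon=[\sigma(k)=l]$; recall a vertex lies in $S$ exactly when $x-\epsilon\in\{0,1\}$, in $T_1$ when $x=0,\epsilon=1$, and in $T_{3,2}$ when $x=2,\epsilon=0$. For the five siblings $\sigma_2,\dots,\sigma_6$, every rearrangement of $(j_1,b,l)$ either sends $k\rightarrow l$ (raising $\epsilon$) or breaks the $i_1$ match (lowering $x$), so each has $x-\epsilon\in\{0,1\}$ and lies in $S$; the six primed permutations additionally break the surviving $i_2$ match at $x=i_2$, dropping $x$ by one, which places $\sigma'_1,\sigma'_2,\sigma'_3,\sigma'_5,\sigma'_6$ in $S$ and $\sigma'_4$ in $T_1$. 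Thus $\sigma_1=\sigma$ is the unique participant outside $T_1\cup S$, and the identity yields $P^{[2]}_\sigma\in LS(T_1\cup S)$. I expect the main obstacle to be purely combinatorial: guaranteeing that the five special indices $i_1,i_2,k,p,y$ can be chosen pairwise distinct and that the transposition at $\{i_2,y\}$ creates no \emph{new} match $i_r\rightarrow j_r$ (which is exactly where the hypothesis $n\geq 5$ is used), together with dispatching the few degenerate overlaps among the problem indices; once $y$ is taken outside $\{i_1,\dots,i_m,k,p\}$ these spurious-match worries vanish and the count above is exact.
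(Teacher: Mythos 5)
Your construction breaks down in exactly the configuration that the paper isolates as its second case: when $p=\sigma^{-1}(l)$ coincides with some $i_r$, $r\geq 3$, \emph{and} $\sigma(k)=j_r$. Nothing rules this out: $p$ is only guaranteed distinct from $i_1,i_2,k$, and such $\sigma$ do exist in $T_{3,2}$ (e.g.\ $i_1\rightarrow j_1$, $i_2\rightarrow j_2$, $i_3\rightarrow l$, $k\rightarrow j_3$, remaining indices mapped arbitrarily). In that situation your key claim --- that every rearrangement of the images $(j_1,b,l)$ on $\{i_1,k,p\}$ either sends $k\rightarrow l$ or breaks the $i_1$ match, and hence lands in $S$ --- is false, because a rearrangement can also \emph{create} the new match $p=i_r\rightarrow b=j_r$. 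Concretely, the arrangement $i_1\rightarrow j_1$, $k\rightarrow l$, $p\rightarrow b$ has $3$ satisfied matches and $\epsilon=1$, so it lies in $T_{2,3}$, and the arrangement $i_1\rightarrow l$, $k\rightarrow j_1$, $p\rightarrow b$ has $2$ satisfied matches and $\epsilon=0$, so it lies in $T_{3,2}$. Thus three of your twelve participants ($\sigma_1$ and these two) fall outside $T_1\cup S$, and the identity of Lemma \ref{zero} can no longer be solved for $P^{[2]}_{\sigma}$. Your proposed remedy --- choosing $y\notin\{i_1,\dots,i_m,k,p\}$ --- does not touch this failure: it only prevents spurious matches coming from the transposition at $\{i_2,y\}$, whereas the problematic match $p\rightarrow\sigma(k)$ comes from permuting images among $\{k_1,k_2,k_3\}=\{i_1,k,p\}$, which your parameter choice forces.

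In the non-degenerate case, where $(\sigma^{-1}(l),\sigma(k))$ is not one of the pairs $(i_r,j_r)$, your bookkeeping is correct, and your argument is essentially the paper's first case with a different but equally valid parameter choice (the paper permutes images of $i_1,i_2,\gamma$ and transposes at $\{k,\sigma^{-1}(l)\}$; you permute images of $i_1,k,p$ and transpose at $\{i_2,y\}$). What is missing is a second step for the degenerate case, as in the paper: there one applies Lemma \ref{zero} with $k_1=k$, $k_2=i_1$, $k_3=i_2$, $x=i_3$, $y=\gamma$, so that ten of the participants lie in $S$ and the remaining one, $\sigma'_1$, is again in $T_{3,2}$ but with $(\sigma'^{-1}_1(l),\sigma'_1(k))=(\gamma,j_3)$ no longer degenerate; this yields $P^{[2]}_{\sigma}-P^{[2]}_{\sigma'_1}\in LS(S)$, after which the non-degenerate case applied to $\sigma'_1$ finishes the proof. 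Without such a two-step reduction, a single application of the identity cannot cover all of $T_{3,2}$.
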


\proof{
Consider any arbitrary permutation, $\sigma$, with the corresponding vertex belonging to $T_{3,2}$.
Let $\beta=\sigma^{-1}(l)$ and $\gamma$ be any arbitrary element from $[n]\setminus \{k,i_1,i_2,\beta\}$. The 
description of $\sigma$ is: $k \rightarrow \alpha, i_1\rightarrow j_1, i_2\rightarrow j_2, \beta 
\rightarrow l, \gamma \rightarrow \delta$ and all other maps are different from $(i_p,j_p)$ for any $p$,
where $\alpha\neq l$. Our goal is to show that $P^{[2]}_{\sigma}\in LS(T_1\cup S)$. Consider two cases.

Case: $(\beta, \alpha) \neq (i_p,j_p)$ for any $p$. Take $\sigma_1=\sigma, k_1=i_1, k_2=i_2, k_3=\gamma, x=k, 
y=\beta$. All the vertices corresponding to permutations $\sigma_2,\dots,\sigma_6,\sigma'_1,\dots,\sigma'_6$ 
generated with these parameters belong to $S\cup T_1$. From lemma \ref{zero} $P^{[2]}_{\sigma_1}\in LS(T_1\cup S)$.

Case: $(\beta,\alpha)=(i_3,j_3)$. In this case $\sigma: k \rightarrow j_3, i_1\rightarrow j_1, i_2\rightarrow j_2, 
i_3 \rightarrow l, \gamma \rightarrow \delta$. Take $\sigma_1=\sigma,k_1=k, k_2=i_1, k_3=i_2, x=i_3, y=\gamma$. 
Then we see that $P^{[2]}_{\sigma_1}$ and $P^{[2]}_{\sigma'_1}$ both belong to $T_{3,2}$ and the vertices corresponding 
to the remaining ten permutations belong to $S$. So $P^{[2]}_{\sigma_1}-P^{[2]}_{\sigma'_1} \in LS(S)$. 
Now from the first case $P^{[2]}_{\sigma'_1}$ belongs to $LS(T_1\cup S)$. Therefore $P^{[2]}_{\sigma_1}$ also 
belongs to $LS(T_1\cup S)$.  $\Box$
} 

\begin{lemma}\label{2-3}
Let $n\geq 5$. Then $T_{2,3}\subset LS(T_1\cup S)$.
\end{lemma}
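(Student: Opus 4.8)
The plan is to take an arbitrary vertex $P^{[2]}_{\sigma}\in T_{2,3}$ and express it, through the identity of Lemma \ref{zero}, as a linear combination of vertices each of which lies either in $S\cup T_1$ or in $T_{3,2}$; since Lemma \ref{3-2} has already placed $T_{3,2}$ inside $LS(T_1\cup S)$, this yields $P^{[2]}_{\sigma}\in LS(T_1\cup S)$. Let the three indices matched by $\sigma$ be $i_a\ar j_a$, $i_b\ar j_b$, $i_c\ar j_c$, so that $\sigma(k)=l$ while $i_r\not\ar j_r$ for every other $r$. I would instantiate Lemma \ref{zero} with $k_1=k$, $k_2=i_a$, $k_3=i_b$, and with transposition indices $x=\gamma$, $y=i_c$, where $\gamma$ is chosen in $[n]\setminus(\{k\}\cup\{i_1,\dots,i_m\})$. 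Such a $\gamma$ exists because $m\le n-3$, and taking it outside every row $i_r$ guarantees that swapping the images of $\gamma$ and $i_c$ neither destroys a match at any $i_r$ nor creates a spurious one.

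The six unprimed permutations $\sigma_1,\dots,\sigma_6$ run over the reassignments of the image triple $\{l,j_a,j_b\}$ among the rows $\{k,i_a,i_b\}$ while keeping $i_c\ar j_c$ fixed, with $\sigma_1=\sigma$ the target. Checking the six cases, the transposition that fixes $l$ keeps $k\ar l$ and leaves a single match among $i_a,i_b$, hence lands in $S$; the two transpositions that move $l$ off $k$ leave two matches with $k\not\ar l$, hence land in $T_{3,2}$; and the two $3$-cycles move $l$ off $k$ and destroy all three matches, again landing in $S$. Thus $\sigma_2,\dots,\sigma_6\in S\cup T_{3,2}$, and by Lemma \ref{3-2} each of these vertices belongs to $LS(T_1\cup S)$. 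Here I rely on the non-degeneracy $k\notin\{i_a,i_b,i_c\}$ and $l\notin\{j_a,j_b,j_c\}$, which is forced by the simultaneous matches, to be certain no reassignment manufactures an extra match at $k$ or at $i_a,i_b$.

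The six primed permutations $\sigma'_1,\dots,\sigma'_6$ are obtained from the unprimed ones by swapping the images of $\gamma$ and $i_c$, which breaks the match $i_c\ar j_c$ and, by the choice of $\gamma$, introduces no new match. Consequently each $\sigma'_t$ carries at most two matches among the $i_r$: the companion $\sigma'_1$ keeps the two matches at $i_a,i_b$ together with $k\ar l$ and so lies in $S$, while the remaining five retain at most one match and fall into $S\cup T_1$. Hence every $\sigma'_t\in S\cup T_1\subseteq LS(T_1\cup S)$. Substituting these memberships into $\sum_{\rho\in\Sigma}\mathrm{sign}(\rho)P^{[2]}_{\rho}=0$ and solving for the unique $T_{2,3}$ term $P^{[2]}_{\sigma}$ writes it as a linear combination of the other eleven vertices, all in $LS(T_1\cup S)$; therefore $P^{[2]}_{\sigma}\in LS(T_1\cup S)$.

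The main obstacle is the bookkeeping that confines all eleven auxiliary vertices to the already-controlled classes $S$, $T_1$, and $T_{3,2}$: the reduction succeeds precisely because the unprimed family only ever descends into $T_{3,2}$, never back into $T_{2,3}$ nor into $T_{3,x}$ with $x\ge 3$, which is exactly why Lemma \ref{3-2} must be in hand first. The delicate point is the existence and correct placement of the free index $\gamma$, which must avoid every row $i_r$ so that transposing its image against $i_c$'s cannot create an extra match; the hypothesis $m\le n-3$ (in particular $n\ge 5$) is what simultaneously secures such a $\gamma$ and keeps the three matched rows together with $k$ available for the construction.
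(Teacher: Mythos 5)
Your proof is correct and follows essentially the paper's own route: a single instantiation of Lemma \ref{zero} using one free index, arranged so that the eleven auxiliary vertices fall in $S\cup T_1\cup T_{3,2}$, followed by an appeal to Lemma \ref{3-2}; the paper's instantiation ($k_1=i_2$, $k_2=i_3$, $k_3=\alpha$ a free row, transposing at $x=k$, $y=i_1$) differs only in which indices play which role, and lands ten of the eleven auxiliary vertices in $S$ and just one in $T_{3,2}$. Two harmless miscounts in your case analysis --- swapping the images of $i_a,i_b$ destroys \emph{both} of those matches (the surviving match is the one at $i_c$), and the two $3$-cycles likewise retain the $i_c$ match rather than destroying all three --- do not affect your conclusions, since in each case the vertex still has $k\rightarrow l$ with one match, or $k\not\rightarrow l$ with at most one match, and hence still lies in $S$.
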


\proof{
Let $P^{[2]}_{\sigma}$ be an arbitrary element of $T_{2,3}$.
We will express $P^{[2]}_{\sigma}$ as a linear combination of some members of $T_{3,2}\cup S$. The rest 
will follow from lemma \ref{3-2}.

Without loss of generality assume that the given permutation $\sigma$ in $T_{2,3}$ maps $k\rightarrow l,
i_1\rightarrow j_1,i_2\rightarrow j_2, i_3\rightarrow j_3$. Also let $\sigma$ map $\alpha\rightarrow \beta$ 
for some $\alpha\notin \{k,i_1,i_2,i_3\}$. Now generate the permutations $\sigma_2,\dots,\sigma_6,\sigma'_1,
\dots,\sigma'_6$ with parameters $\sigma_1=\sigma,k_1=i_2,k_2=i_3,k_3=\alpha,x=k,y=i_1$. See that 
$P^{[2]}_{\sigma'_1}\in T_{3,2}$ 
and the remaining ten permutations belongs to $S$. So $P^{[2]}_{\sigma_1}-
P^{[2]}_{\sigma'_1} \in LS(S)$. From lemma \ref{3-2}, $P^{[2]}_{\sigma}\in LS(S\cup T_1)$. 
$\Box$}

\begin{lemma}
Let $n\geq 6$. Given any $P^{[2]}_{\sigma}$ in $T_{2,r}$ (resp. $T_{3,r}$) with $r>3$ (resp. $r>2$), it can be  
expressed as a linear combination of elements of $T_1\cup S$.
\end{lemma}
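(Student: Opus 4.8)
The plan is to prove both families at once by a single induction. For a permutation $\sigma$ set $e(\sigma)=1$ if $\sigma(k)=l$ and $e(\sigma)=0$ otherwise, and let $c(\sigma)$ be the number of indices $p$ with $\sigma(i_p)=j_p$. Substituting the realization $u_{ij}=\omega$ when $\sigma(i)=j$ and $u_{ij}=0$ otherwise into $(u_{i_1j_1}+\dots+u_{i_mj_m}-u_{kl}-0.5\omega)^2$ gives $(c-e-0.5)^2$, so $P^{[2]}_{\sigma}\in S$ exactly when $c-e\in\{0,1\}$, while $T_1$ is the case $(e=1,c=0)$, $T_{2,c}$ is $(e=1,c\geq 3)$, and $T_{3,c}$ is $(e=0,c\geq 2)$. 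I would induct on $\rho(\sigma)=2c(\sigma)-e(\sigma)$. A short check shows $\rho\leq 3$ is precisely $S\cup T_1$, that the value $\rho=4$ is exactly the class $T_{3,2}$ and $\rho=5$ is exactly $T_{2,3}$, and that in general an even value $\rho=2c$ marks $T_{3,c}$ and an odd value $\rho=2c-1$ marks $T_{2,c}$. Thus the two smallest nontrivial ranks are settled by Lemmas \ref{3-2} and \ref{2-3}, which serve as the base, and the lemma's range ($T_{2,r}$ with $r\geq 4$, $T_{3,r}$ with $r\geq 3$) is exactly $\rho\geq 6$. For the step I would apply Lemma \ref{zero} to produce a vanishing combination of twelve vertices in which the target is the only one of its own rank, every companion lying in $S\cup T_1$ or in a class of strictly smaller $\rho$, hence in $LS(T_1\cup S)$ by hypothesis.

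For a target in $T_{2,c}$ with $c\geq 4$ (so $e=1$ and at least four matched indices, say $i_1,\dots,i_4$) I would set $\sigma_1=\sigma$, take $k_1=i_2,k_2=i_3,k_3=i_4$, and transpose $x=k,y=i_1$. Since $e=1$ forces $\sigma(k)=l$, no matched index can map to $l$, so $j_1\neq l$; hence the partner $\sigma'_1$ sends $k\to j_1\neq l$ and $i_1\to l$, giving $e=0$ and exactly $c-1$ matches, i.e.\ $\sigma'_1\in T_{3,c-1}$ with $\rho=2c-3$. The five image-permutations $\sigma_2,\dots,\sigma_6$ keep $e=1$ while destroying two or three of the matches at $i_2,i_3,i_4$, so they fall into $T_{2,\cdot}$ with smaller $c$ or into $S$; applying the transposition additionally unmatches $i_1$ and flips $e$ to $0$, so $\sigma'_2,\dots,\sigma'_6$ land in $T_{3,\cdot}$ with smaller $c$ or in $S\cup T_1$. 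Every companion has strictly smaller $\rho$, and Lemma \ref{zero} then writes $P^{[2]}_{\sigma}$ over $T_1\cup S$. This branch is clean because $e=1$ rules out all the coincidences discussed next.

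The case of a target in $T_{3,c}$ with $c\geq 3$ (so $e=0$) is parallel but requires one distinction. Let $\beta=\sigma^{-1}(l)$. If $\beta$ is \emph{not} a matched index I would choose three matched indices as $k_1,k_2,k_3$ and transpose $x=k,y=\beta$; then $\sigma'_1$ acquires $k\to l$, raising $e$ to $1$ while preserving all $c$ matches, so $\sigma'_1\in T_{2,c}$ with $\rho=2c-1<2c$, and $\sigma_2,\dots,\sigma_6,\sigma'_2,\dots,\sigma'_6$ again drop in rank exactly as above. The hard part will be the coincidence $\beta$ matched, equivalently some $j_p=l$: then $\beta$ is one of the very indices one wants to use, and for $c=3$ the choice $\beta\in\{k_1,k_2,k_3\}$ is forced, making the transposition illegal. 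For $c\geq 4$ this is still harmless, since there remain $c-1\geq 3$ matched indices other than $\beta$ to serve as $k_1,k_2,k_3$, and transposing $x=k,y=\beta$ now also unmatches $\beta$, placing $\sigma'_1$ in $T_{2,c-1}$.

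For the genuinely tight instance $c=3$ with $\beta$ matched, say $\beta=i_3$ and $j_3=l$, I would instead keep $k_1=i_1,k_2=i_2$, take $k_3=\gamma$ a generic index outside $\{i_1,\dots,i_m,k,i_3\}$, and transpose $x=k,y=i_3$. The partner $\sigma'_1$ then has $k\to l$ and $i_3$ unmatched, so $e=1,c=2$ and it lies in $S$; the plain image-permutations $\sigma_2,\dots,\sigma_6$ keep $i_3$ matched but leave at most one match among $i_1,i_2$, landing in $T_{3,2}$ (Lemma \ref{3-2}) or in $S$, while their transposes additionally unmatch $i_3$ and set $e=1$, landing in $S\cup T_1$. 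In every branch the rank bookkeeping is routine once the parameters are fixed, and the hypotheses $n\geq 6$ and $m\leq n-3$ are precisely what guarantee the spare index $\gamma$ and, more generally, the freedom to select all the indices above; the conclusion then follows from Lemma \ref{zero} together with the induction hypothesis.
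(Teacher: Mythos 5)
Your overall scheme is essentially the paper's: the rank $\rho=2c-e$ encodes the same induction on the number of matches that the paper runs, and your $T_{2,c}$ step (parameters $k_1=i_2$, $k_2=i_3$, $k_3=i_4$, $x=k$, $y=i_1$) is correct apart from the harmless slip that $\sigma'_1\in T_{3,c-1}$ has $\rho=2c-2$, not $2c-3$. The genuine gap is in the $T_{3,c}$ branch. The coincidence you single out as ``the hard part'' ($\beta$ matched, equivalently $j_p=l$ for some $p$) cannot occur at all: the inequality is posed for cells with distinct columns (as in Theorem \ref{secondset}, where $q_1,q_2,l$ are distinct; if $j_p=l$ then $Y_{i_pj_p,kl}$ vanishes identically by equation \ref{2.2} and the inequality degenerates), and the paper itself treats $i_3\to l$ as unmatched in Lemma \ref{3-2}. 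The coincidence that actually hurts is left inside your ``easy'' sub-case: $\beta=\sigma^{-1}(l)$ may be an \emph{unmatched} index $i_p$ (possible whenever $c<m$) with, in addition, $\sigma(k)=j_p$. Then your transposition $x=k$, $y=\beta$ simultaneously produces $\sigma'_1(k)=l$ and $\sigma'_1(i_p)=j_p$, so $\sigma'_1$ \emph{gains} a match: it lies in $T_{2,c+1}$ with $\rho=2c+1>2c$, and the induction hypothesis does not cover it. Concretely, for $n=7$, $m=4$, cells $(t,t)$ for $t=1,\dots,4$, $(k,l)=(5,5)$, and $\sigma$ the transposition exchanging $4$ and $5$, your target is in $T_{3,3}$ but your $\sigma'_1$ is the identity permutation, which is in $T_{2,4}$.

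This cannot be repaired by invoking your own $T_2$ step on that companion, because expressing a $T_{2,c+1}$ vertex by that step uses a $T_{3,c}$ companion --- the very vertex you started from --- so the argument becomes circular. The paper avoids the problem by never touching the image of $k$ in the $T_3$ case: it transposes $\beta$ (its $\alpha$) with a \emph{matched} index, taking $x=i_4$, $y=\beta$, $k_1=i_1$, $k_2=i_2$, $k_3=i_3$ when $c\geq 4$, and $x=i_3$, $y=\beta$ with $k_3$ a spare index when $c=3$. Then $e=0$ for all twelve permutations, the transposed ones lose the match at $i_4$ (resp.\ $i_3$) and can create no new match (a new match at $\beta$ would force $\beta$ to equal that same index), so every companion lies in $S\cup\bigcup_{2\leq x<c}T_{3,x}$ and the induction closes. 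Note that your troublesome configuration is exactly the one Lemma \ref{3-2} must treat separately at the bottom level $c=2$ (its case $(\beta,\alpha)=(i_3,j_3)$); your proposal in effect extends Lemma \ref{3-2}'s transposition $x=k$, $y=\beta$ to all $c$, but drops the guard precisely where it is needed.
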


\begin{proof}
Let $P^{[2]}_{\sigma_1}\in T_{3,r}$ with $r\geq 3$. Assume that $\sigma_1$ maps $\alpha\rightarrow l, k\rightarrow
\gamma, i_1\rightarrow j_1, i_2 \rightarrow j_2, i_3\rightarrow j_3,\dots, i_r\rightarrow j_r$. If $r=3$, then 
consider the parameters $x=i_3, y= \alpha, k_1=i_1, k_2=i_2, k_3=\beta \notin \{i_1,i_2,i_3,k,\alpha\}$.
Otherwise let $x=i_4,y= \alpha, k_1=i_1, k_2=i_2, k_3=i_3$. Generate $\sigma_2,\dots,\sigma_6,
\sigma'_1,\dots,\sigma'_6$. Corresponding vertices either belong to $S$ or to
$\cup_{2\leq x<r}T_{3,x}$.
 So using induction on $r$ and the result of lemma \ref{3-2} as the base case, 
lemma \ref{zero} gives that $P^{[2]}_{\sigma_1}\in LS(T_1\cup S)$.

Similar argument shows that $T_{2,r}$ vertices also belong to $LS(T_1\cup S)$.
$\Box$
\end{proof}

\begin{corollary}\label{T23}
If $n\geq 6$, then $T_2\cup T_3\subset LS(T_1\cup S)$.
\end{corollary}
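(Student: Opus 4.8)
The plan is to assemble the preceding lemmas through the decomposition of $T_2$ and $T_3$ into their subclasses. Recall from the setup that $T_2=\cup_{x\geq 3}T_{2,x}$ and $T_3=\cup_{x\geq 2}T_{3,x}$, so it suffices to show that each subclass appearing in these two unions lies in $LS(T_1\cup S)$; since a subspace that contains each member of a family contains their union, the containment $T_2\cup T_3\subset LS(T_1\cup S)$ then follows at once.

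First I would dispose of the two base cases, namely the smallest index appearing in each union. Lemma \ref{3-2} gives $T_{3,2}\subset LS(T_1\cup S)$ and Lemma \ref{2-3} gives $T_{2,3}\subset LS(T_1\cup S)$; both hypotheses require only $n\geq 5$, which is implied by $n\geq 6$. Next, every remaining subclass is covered by the immediately preceding lemma, which for $n\geq 6$ places each vertex of $T_{3,r}$ with $r>2$ and each vertex of $T_{2,r}$ with $r>3$ inside $LS(T_1\cup S)$. One checks that the ranges match exactly: $T_3$ is exhausted by $x=2$ (base case) together with $x>2$ (the lemma), and $T_2$ by $x=3$ (base case) together with $x>3$ (the lemma). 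Taking unions yields $T_3\subset LS(T_1\cup S)$ and $T_2\subset LS(T_1\cup S)$, hence the corollary.

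There is no genuine obstacle left at this stage, and the corollary is best viewed as the bookkeeping step that packages the earlier work. All of the geometric content, namely constructing the explicit permutation families $\sigma_2,\dots,\sigma_6,\sigma'_1,\dots,\sigma'_6$ and invoking the vanishing identity of Lemma \ref{zero} to realize each difference vector inside $LS(S)$, has already been carried out; moreover the induction on $r$ in the preceding lemma already bottoms out precisely at Lemmas \ref{3-2} and \ref{2-3}. The only thing to verify is that the stated index ranges leave no subclass of $T_2$ or $T_3$ uncovered, which they do not. Consequently the proof is a one-line aggregation, and the real effort in establishing $S$ as a facet will instead be concentrated in the subsequent argument that, together with Corollary \ref{T1}, reduces the full vertex set to $\{v_0\}\cup S$ so that Lemma \ref{st} applies.
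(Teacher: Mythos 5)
Your proof is correct and matches the paper's own argument, which simply states that the corollary follows from the previous three lemmas: the base cases $T_{3,2}$ and $T_{2,3}$ from Lemmas \ref{3-2} and \ref{2-3}, and the remaining subclasses $T_{3,r}$ ($r>2$) and $T_{2,r}$ ($r>3$) from the inductive lemma, together exhausting $T_2\cup T_3$. Your explicit check that the index ranges leave no subclass uncovered is the only content of this step, and you have it right.
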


\proof{
Follows from the previous three lemmas. $\Box$
}

\begin{theorem}\label{expset}
If $n\geq 6$, then inequality (\ref{ineqsys}) defines a facet of $\mathcal{B}^{[2]}$.
\end{theorem}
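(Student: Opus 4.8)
The plan is to apply Lemma \ref{st} (via Corollary \ref{stc}) exactly as in the previous two facet theorems, using $v_0$ as a single distinguished vertex in $V \setminus S = T$ and showing that every vertex of $V$ lies in $LS(\{v_0\} \cup S)$. First I would pick any fixed vertex $v_0 = P^{[2]}_{\sigma_*}$ in the class $T_1$; such a vertex exists because $T_1$ is nonempty whenever $3 \leq m \leq n-3$ (one can send $k \to l$ and route each $i_r$ away from $j_r$, which the counting condition $|\cup I_i| \leq n-a-b$ in Lemma \ref{connected}(2) guarantees). By Corollary \ref{T1}, every other vertex of $T_1$ already lies in $LS(\{v_0\} \cup S)$, so $T_1 \subseteq LS(\{v_0\} \cup S)$.

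The bulk of the work is already assembled in the preceding lemmas, so the remaining step is to combine them. By Corollary \ref{T23}, for $n \geq 6$ we have $T_2 \cup T_3 \subseteq LS(T_1 \cup S)$. Since $T = T_1 \cup T_2 \cup T_3$, chaining the two inclusions gives
\begin{equation*}
T = T_1 \cup T_2 \cup T_3 \subseteq LS(T_1 \cup S) \subseteq LS(\{v_0\} \cup S).
\end{equation*}
Every vertex of $S$ trivially lies in $LS(\{v_0\} \cup S)$, and $V = S \cup T$, so every vertex of $V$ is a linear combination of $\{v_0\} \cup S$. I would then verify the remaining hypotheses of Lemma \ref{st}: the inequality (\ref{ineqsys}) is satisfied by all $P^{[2]}_{\sigma}$ (this follows from the completely-positive/united-vector derivation, since it is the instance $(u_{i_1j_1} + \dots + u_{i_mj_m} - u_{kl} - 0.5\omega)^2 \geq 0.25$ of the generic inequality (\ref{gen-ineq})); the set $V \setminus S = T$ is nonempty since $T_1 \neq \emptyset$; and the affine plane of $V$ does not contain the origin, which holds because every $P^{[2]}_{\sigma}$ satisfies $\sum_{ij} Y_{ij,ij} = n$ by Equation (\ref{2.4}), a nonzero affine constraint. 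Lemma \ref{st} then yields that $S$ is a facet.

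The main obstacle is not in this final assembly—which is essentially bookkeeping—but in confirming that the auxiliary lemmas cover all of $T$ under the stated range $3 \leq m \leq n-3$ with $n \geq 6$. The delicate point is the base cases and the induction in the last lemma: the reductions $T_{3,2} \subseteq LS(T_1 \cup S)$ and $T_{2,3} \subseteq LS(T_1 \cup S)$ require $n \geq 5$ to furnish the extra index $\gamma$ (resp.\ $\alpha$) needed to invoke Lemma \ref{zero}, while peeling one matched pair at a time in $T_{2,r}$ and $T_{3,r}$ requires $n \geq 6$ so that the transposition indices $x, y$ and the triple $k_1, k_2, k_3$ can be chosen disjointly. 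I would therefore carefully check that the condition $m \leq n-3$ leaves enough unmatched indices for each application of Lemma \ref{zero}, since it is precisely this slack that guarantees the intermediate permutations $\sigma_2, \dots, \sigma_6, \sigma'_1, \dots, \sigma'_6$ fall into $S$ (or into a strictly smaller subclass $T_{3,x}$, $x < r$) rather than escaping the controlled set. Once that is confirmed, the inclusions compose cleanly and the theorem follows immediately from Corollary \ref{stc}.
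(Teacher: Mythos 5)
Your proposal is correct and is essentially identical to the paper's own proof: the paper likewise fixes one vertex in $T$, invokes Corollaries \ref{T1} and \ref{T23} to conclude that every vertex of $T$ lies in the span of that vertex together with $S$, and then applies Lemma \ref{st}. The additional checks you spell out (nonemptiness of $T_1$, validity of inequality (\ref{ineqsys}) on all vertices, and the affine hull avoiding the origin) are details the paper leaves implicit, and your verification of them is sound.
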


\proof{
From corollaries \ref{T1} and \ref{T23} every vertex in $T$ can be expressed as a linear combination of a 
fixed vertex in $T$ and the vertices of $S$. Now the result follows from lemma \ref{st}. $\Box$
}

Together $\frac{n^2(n-1)^2}{2}+\sum_{i=2}^{n-3}$ $\frac{n^2(n-1)^2\cdots(n-i)^2}{i!}$ facets of the polytope are
defined by theorems \ref{nonneg},\ref{secondset},\ref{expset}.

\subsection{Previously Known Facets of ${\cal B}^{[2]}$}

Let $P_1,P_2$ be disjoint subsets of $[n]$. Similarly $Q_1,Q_2$ are also disjoint subsets of $[n]$. 
Then the $4$-box inequality discussed in \cite{JungerK97,KaibelPhD97} is 
$(-\sum_{i\in P_1,j\in Q_1}u_{ij} - \sum_{i\in P_2,j\in Q_2}u_{ij}$ 
$ + \sum_{i\in P_1,j\in Q_2}u_{ij} + \sum_{i\in P_2,j\in Q_1}u_{ij}$ 
$ - (\beta-0.5)\omega)^2 \geq 0.25$. The $1$-box inequality is equivalent to $(\sum_{i\in P_1,j\in Q_2}u_{ij}-$ 
$(\beta-0.5)\omega)^2 \geq 0.25$ and is obtained by setting $P_2=Q_1=\emptyset$ in the $4$-box inequality, whereas
the $2$-box inequality corresponds to $(- \sum_{i\in P_2,j\in Q_2}u_{ij}$ $ + \sum_{i\in P_1,j\in Q_2}u_{ij}$ 
$- (\beta-0.5)\omega)^2 \geq 0.25$ and is obtained by setting $Q_1=\emptyset$ in the $4$-box inequality.
All the facets listed in \cite{JungerK97,KaibelPhD97} are special instances of either the $1$-box or the
$2$-box inequality.

\section{Insufficiency of inequality (\ref{gen-ineq})} \label{apx_insuf}

We will show that even after including every facet given by the inequality
(\ref{gen-ineq})
in the SDP relaxation of CP, the resulting feasible region remains larger than ${\cal B}^{[2]}$.
Hence there exist facets of ${\cal B}^{[2]}$ which are yet to be discovered.
In the following we will replace the inequality (\ref{gen-ineq}) with the equivalent 
$\sum_{ijkl} x_{ij}x_{kl}$ $Y_{ij,kl}$ $- (2z-1) \sum_{ij}x_{ij}Y_{ij,ij}$ $+ z^2 - z\geq 0 $.
Further let ${\cal F}$ denote the feasible region of SDP where the objective function attains its maximum
value, $n$.

\begin{lemma}\label{ppp} Let $P_{\sigma}$ denote the row-major vectorization of the
corresponding permutation matrix. Following statements are equivalent.

1. Region of ${\cal F}$, satisfying conditions
$\sum_{ijkl} x_{ij}x_{kl}$ $Y_{ij,kl}$ $ - (2z-1) \sum_{ij}x_{ij}Y_{ij,ij}$ $ + z^2 - z\geq 0 $
for all $x_{ij}, z\in \mathbb{Z}$ is exactly equal to ${\cal B}^{[2]}$.

2. Given any set of permutations $I$ such that $\{P^{[2]}_{\sigma}|\sigma\in I\}$ is L.I.
Then $\sum_{\sigma\in I}\alpha_{\sigma}((P_{\sigma}^T\cdot x)^2 -(2z-1) (P_{\sigma}^T\cdot x)) +z^2
- z\geq 0$ for all $x\in \mathbb{Z}^{n^2}, z\in \mathbb{Z}$
if and only if $\alpha_{\sigma}\geq 0\ \forall\ \sigma\in I$ and $\sum_{\sigma\in I}\alpha_{\sigma}=1$.
\end{lemma}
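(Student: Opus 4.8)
The plan is to treat the restated inequality of Section~\ref{apx_insuf} as an affine functional of $Y$ and to compute its value on the extreme points. Evaluating $\sum_{ijkl} x_{ij}x_{kl}Y_{ij,kl}-(2z-1)\sum_{ij}x_{ij}Y_{ij,ij}+z^2-z$ at $Y=P^{[2]}_{\sigma}$, and using $(P^{[2]}_{\sigma})_{ij,kl}=(P_{\sigma})_{ij}(P_{\sigma})_{kl}$ together with $(P_{\sigma})_{ij}^2=(P_{\sigma})_{ij}$, collapses the double sum to $(P_{\sigma}^T\cdot x)^2$ and the diagonal sum to $P_{\sigma}^T\cdot x$, so the functional takes the value $(P_{\sigma}^T\cdot x)^2-(2z-1)(P_{\sigma}^T\cdot x)+z^2-z$ on $P^{[2]}_{\sigma}$. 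For a point $Y=\sum_{\sigma\in I}\alpha_{\sigma}P^{[2]}_{\sigma}$ with $\sum_{\sigma\in I}\alpha_{\sigma}=1$ — a normalisation that is automatic on the affine plane of Lemma~\ref{affine}, since that plane misses the origin — linearity gives the key identity
\[
\sum_{\sigma\in I}\alpha_{\sigma}\bigl((P_{\sigma}^T\cdot x)^2-(2z-1)(P_{\sigma}^T\cdot x)\bigr)+z^2-z=\sum_{\sigma\in I}\alpha_{\sigma}\,(P_{\sigma}^T\cdot x-z)(P_{\sigma}^T\cdot x-z+1).
\]
This identity is the dictionary that turns statement~1, which speaks about the functional on the geometric region $\mathcal{F}$, into statement~2, which speaks about the coefficients $\alpha_{\sigma}$.

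The easy, unconditional half is the \emph{if} part of statement~2. Since $x\in\mathbb{Z}^{n^2}$ and $P_{\sigma}$ is a $0/1$ vector, each $P_{\sigma}^T\cdot x$ is an integer, so for integral $z$ every factor $(P_{\sigma}^T\cdot x-z)(P_{\sigma}^T\cdot x-z+1)$ is a product of two consecutive integers, hence $\geq 0$. With $\alpha_{\sigma}\geq 0$ the right-hand side of the identity is a nonnegative combination and the inequality holds; this simultaneously re-proves that every $P^{[2]}_{\sigma}$, and therefore all of $\mathcal{B}^{[2]}$, satisfies the whole family, which is the trivial inclusion $\mathcal{B}^{[2]}\subseteq\mathcal{F}\cap\{\text{all gen.\ ineqs}\}$.

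Granting the full characterisation of statement~2, the equivalence with statement~1 is then just a matter of pushing points across the dictionary. By the optimum analysis preceding Lemma~\ref{affine} we have $\mathcal{F}$ contained in the affine plane $P$, so every $Y\in\mathcal{F}$ can be written as $\sum_{\sigma\in I}\alpha_{\sigma}P^{[2]}_{\sigma}$ for a \emph{linearly independent} family with $\sum_{\sigma\in I}\alpha_{\sigma}=1$. For such a $Y$ the chain reads: $Y$ satisfies the whole family $\iff$ its coefficients satisfy the inequality of statement~2 for all integral $x,z$ (by the identity) $\iff$ $\alpha_{\sigma}\geq 0$ for all $\sigma$ (by statement~2) $\iff$ $Y\in\mathcal{B}^{[2]}$. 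This chain, read over all $Y\in\mathcal{F}$, is exactly statement~1, and since each link is an equivalence it also runs backwards, yielding (1)$\Leftrightarrow$(2). In particular I would prove statement~2 as the self-contained core and obtain statement~1 as its geometric transcription, rather than the other way round.

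The genuine content, and the step I expect to be hardest, is therefore the \emph{only if} part of statement~2: that nonnegativity of $\sum_{\sigma\in I}\alpha_{\sigma}(P_{\sigma}^T\cdot x-z)(P_{\sigma}^T\cdot x-z+1)$ over all integral $x,z$ already forces each $\alpha_{\sigma}\geq 0$. This cannot be argued naively from ``a linearly independent representation has a negative coefficient'', because representations over the vertices are highly non-unique and even a vertex itself can have negative coordinates in a linearly independent spanning family. Instead, assuming some $\alpha_{\sigma_0}<0$, I would use the linear independence of the \emph{second-order} matrices $\{P^{[2]}_{\sigma}:\sigma\in I\}$ to choose an integral $x$ that separates $\sigma_0$ from the remaining indices of $I$, placing the values $P_{\sigma}^T\cdot x$ in a controlled configuration, and then pick an integral $z$ so that the $\sigma_0$-term is strictly negative while the other $\alpha$-weighted products of consecutive integers cannot compensate, producing a pair $(x,z)$ that violates the inequality; equivalently, this is a Farkas/separation certificate for the cone generated by the integer evaluation vectors. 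Guaranteeing that such an integral $(x,z)$ exists despite the coupling among the $|I|$ terms, using only that the $P^{[2]}_{\sigma}$ (and not merely the first-order $P_{\sigma}$) are independent, is the crux of the whole argument.
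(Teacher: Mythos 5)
Your dictionary identity and the ``if'' half of statement~2 are exactly the computation in the paper's own proof: every $Y\in\mathcal{F}$ lies in the affine hull of the $P^{[2]}_{\sigma}$s by Lemma~\ref{affine}, so $Y=\sum_{\sigma\in I}\alpha_{\sigma}P^{[2]}_{\sigma}$ with $\sum_{\sigma\in I}\alpha_{\sigma}=1$ and linearly independent support, and evaluating the functional on such a $Y$ collapses it to $\sum_{\sigma\in I}\alpha_{\sigma}\bigl((P_{\sigma}^T\cdot x)^2-(2z-1)(P_{\sigma}^T\cdot x)\bigr)+z^2-z$; together with the remark that $Y\in\mathcal{B}^{[2]}$ exactly when the coefficients are nonnegative, this transfer is the entire content of the paper's proof. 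Up to your third paragraph you are on the paper's track.

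The gap is in your overall architecture. The lemma asserts only the \emph{equivalence} of statements 1 and 2; it does not assert that either statement is true, and in fact both are false --- that is the whole point of Section~\ref{apx_insuf}, which uses this lemma to conclude that $\mathcal{B}^{[2]}$ has facets not of the form (\ref{gen-ineq}). The step you single out as ``the crux of the whole argument,'' namely the only-if half of statement~2 (that nonnegativity of $\sum_{\sigma\in I}\alpha_{\sigma}(P_{\sigma}^T\cdot x-z)(P_{\sigma}^T\cdot x-z+1)$ over all integral $(x,z)$ forces every $\alpha_{\sigma}\geq 0$), is not a hard step you have deferred: it is a false statement, so the separating pair $(x,z)$ you hope to construct cannot exist. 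The paper exhibits the counterexample immediately after the lemma: for $n\geq 6$ the quadratics $y^2_{\sigma}-y_{\sigma}$ live in a space of dimension $1+(n^2+1)+(n^4+n^2)/2<n!$, so a minimal linearly dependent set $J$ gives $\sum_{\sigma\in J}\alpha_{\sigma}(y^2_{\sigma}-y_{\sigma})=0$ with coefficients of both signs; dropping one negative term $\sigma_1$ leaves a \emph{linearly independent} family $J\setminus\{\sigma_1\}$ whose weighted sum equals $(-\alpha_{\sigma_1})(y^2_{\sigma_1}-y_{\sigma_1})\geq 0$ for all integral $(x,z)$, yet it still carries the negative coefficient $\alpha_{\sigma_2}$. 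Note that this is precisely the phenomenon you yourself flagged as a caveat --- the surviving combination represents (after normalization) the vertex $P^{[2]}_{\sigma_1}$ written over a linearly independent family with a negative coordinate --- so your own warning is the reason your program cannot close. Had it closed, you would also have proved statement~1, contradicting the insufficiency theorem and its corollary. What the lemma actually requires is only the transfer argument (the dictionary read in both directions); the ``self-contained core'' you propose to add is exactly what must \emph{not} be provable.
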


\begin{proof}
Let $Y$ be a point in the feasible region of the SDP. We know from lemma \ref{affine} that
 $Y$ is in the affine hull of $P^{[2]}_{\sigma}$s. So there exist $\alpha_{\sigma}$ such
that $Y=\sum_{\sigma}\alpha_{\sigma}P^{[2]}_{\sigma}$ where $\sum_{\sigma} \alpha_{\sigma}= 1$ and
$\{P^{[2]}_{\sigma}|\alpha_{\sigma}\neq 0\}$ is linearly independent.

Consider arbitrary $x \in \mathbb{Z}^{n^2}$ and $z\in \mathbb{Z}$. So
$\sum_{ijkl} x_{ij}x_{kl}$ $Y_{ij,kl}$ $ - (2z-1) \sum_{ij}x_{ij}Y_{ij,ij}$ $ +z^2 - z$
$= \sum_{\sigma}\alpha_{\sigma}\sum_{ijkl}x_{ij}x_{kl}$ $ P^{[2]}_{\sigma}(ij,kl)$
$- (2z-1)\sum_{\sigma} \alpha_{\sigma}\sum_{ij}x_{ij}$ $P^{[2]}_{\sigma}(ij,ij)$ $ +z^2 - z$
$= \sum_{\sigma}\alpha_{\sigma}\sum_{ijkl}x_{ij}x_{kl}(P_{\sigma})_{ij}(P_{\sigma})_{kl}$
$- (2z-1)\sum_{\sigma} \alpha_{\sigma}\sum_{ij}x_{ij}$ $(P_{\sigma})_{ij}$ $ +z^2 - z$
$=  \sum_{\sigma}\alpha_{\sigma}(P^T_{\sigma}\cdot x)^2$ $ - (2z-1)\sum_{\sigma}\alpha_{\sigma}
(P^T_{\sigma}\cdot x)$ $ +z^2 - z$.

Besides, $Y\in{\cal B}^{[2]}$ if and only if $\alpha_{\sigma}\geq 0\ \forall\ \sigma$. $\Box$
\end{proof}

We first prove a useful lemma. In the following let $P^T_{\sigma}\cdot x +z =\sum_{i=1}^n
x_{i,\sigma(i)}+z$ be denoted by $y_{\sigma}$.

\begin{lemma}\label{sss} $\sum_{\sigma}\alpha_{\sigma}(y^2_{\sigma}-y_{\sigma})=0$ for all
$x\in \mathbb{Z}^{n^2},$ $z\in \mathbb{Z}$ if and only if $\sum_{\sigma}\alpha_{\sigma}y^2_{\sigma}=0$ 
for all $x\in \mathbb{Z}^{n^2},$ $z\in\mathbb{Z}$.
\end{lemma}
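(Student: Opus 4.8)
The plan is to reduce both sides of the claimed equivalence to one common set of coefficient conditions, exploiting the structural fact that the variable $z$ enters every $y_\sigma$ with the same coefficient $1$. Writing $\ell_\sigma(x)=\sum_{i=1}^n x_{i,\sigma(i)}$, so that $y_\sigma=\ell_\sigma(x)+z$, I would first record the expansions $y_\sigma^2=\ell_\sigma^2+2z\ell_\sigma+z^2$ and $y_\sigma^2-y_\sigma=(\ell_\sigma^2-\ell_\sigma)+z(2\ell_\sigma-1)+z^2$. Summing against the weights $\alpha_\sigma$ then groups naturally by powers of $z$, and the central observation is that each of the two sums, viewed as a polynomial in the integer variables $x$ and $z$, vanishes on all of $\mathbb{Z}^{n^2}\times\mathbb{Z}$ precisely when every monomial coefficient vanishes---a polynomial vanishing on the full integer lattice, a Zariski-dense set, is identically zero.

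With this in hand I would show that the left-hand condition $\sum_\sigma\alpha_\sigma(y_\sigma^2-y_\sigma)=0$ is equivalent to the three identities (a) $\sum_\sigma\alpha_\sigma=0$, (b) $\sum_\sigma\alpha_\sigma\ell_\sigma=0$ as a linear form in $x$, and (c) $\sum_\sigma\alpha_\sigma\ell_\sigma^2=0$. Reading off the coefficient of $z^2$ gives (a); granting (a), the coefficient of $z$ collapses from $2\sum_\sigma\alpha_\sigma\ell_\sigma-\sum_\sigma\alpha_\sigma$ to $2\sum_\sigma\alpha_\sigma\ell_\sigma$, yielding (b); and granting (b), the $z$-free part collapses from $\sum_\sigma\alpha_\sigma\ell_\sigma^2-\sum_\sigma\alpha_\sigma\ell_\sigma$ to $\sum_\sigma\alpha_\sigma\ell_\sigma^2$, yielding (c). The converse implication is immediate by substituting (a)--(c) back into the expansion.

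The same bookkeeping applied to $\sum_\sigma\alpha_\sigma y_\sigma^2=\sum_\sigma\alpha_\sigma\ell_\sigma^2+2z\sum_\sigma\alpha_\sigma\ell_\sigma+z^2\sum_\sigma\alpha_\sigma$ shows directly that the right-hand condition is equivalent to the very same trio, since its three coefficient blocks (in $z^0,z^1,z^2$) are exactly (c), (b) and (a). Having characterised both sides of the stated equivalence by the identical set (a)--(c), the lemma follows at once.

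I expect the only delicate point to be the passage from ``vanishes for all integer $(x,z)$'' to ``vanishes coefficientwise,'' together with the structural remark that $z$ carries coefficient $1$ in every $y_\sigma$ so that $y_\sigma-y_\tau$ is free of $z$ and the separation by degree in $z$ is clean. I would make the coefficient-extraction explicit---for fixed $x$, a polynomial of degree at most $2$ in $z$ vanishing at every integer $z$ is the zero polynomial, and then let $x$ range over $\mathbb{Z}^{n^2}$---since this is the single place where the quantifier ``for all $x\in\mathbb{Z}^{n^2},\,z\in\mathbb{Z}$'' is genuinely used. The remaining algebra is routine.
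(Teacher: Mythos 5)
Your proof is correct, but it follows a genuinely different route from the paper's. You reduce both vanishing conditions to one common normal form: expanding $y_\sigma=\ell_\sigma(x)+z$ and invoking the fact that a polynomial vanishing on the whole integer lattice is identically zero, you show that each condition is equivalent to the trio $\sum_\sigma\alpha_\sigma=0$, $\sum_\sigma\alpha_\sigma\ell_\sigma\equiv 0$, $\sum_\sigma\alpha_\sigma\ell_\sigma^2\equiv 0$, after which the equivalence is automatic. The paper instead proves the two implications directly, each with its own elementary trick: for the direction from $\sum_\sigma\alpha_\sigma y_\sigma^2\equiv 0$ to $\sum_\sigma\alpha_\sigma(y_\sigma^2-y_\sigma)\equiv 0$ it uses a finite-difference argument, perturbing a single coordinate $x_{ij}$ by $\pm 1$ and subtracting to extract $\sum_{\sigma:\sigma(i)=j}\alpha_\sigma y_\sigma=0$, then summing over $j$ to get $\sum_\sigma\alpha_\sigma y_\sigma=0$; for the converse it uses the parity identity $\sum_\sigma\alpha_\sigma y_\sigma^2(x,z)=\tfrac12\bigl(T(x,z)+T(-x,-z)\bigr)$ with $T=\sum_\sigma\alpha_\sigma(y_\sigma^2-y_\sigma)$, exploiting that negating $(x,z)$ negates every $y_\sigma$ and that the integer lattice is symmetric under negation. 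Your approach buys transparency and a stronger conclusion (an explicit characterization of both conditions by the same coefficient identities, which would generalize to any family of affine forms in which $z$ has coefficient $1$), at the cost of invoking the polynomial-identity principle on $\mathbb{Z}^{n^2}\times\mathbb{Z}$; the paper's argument stays entirely inside the lattice with pointwise manipulations and never needs to pass to identities of polynomials, which is why it can be stated so briefly. Your one flagged delicate point, the passage from lattice vanishing to coefficientwise vanishing, is handled correctly by your fix-$x$-then-vary-$x$ argument.
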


\begin{proof} (If) Let $S(x,z) = \sum_{\sigma}\alpha_{\sigma}y^2_{\sigma}$. We have
$S(x,z)=0$ for all $x\in \mathbb{Z}^{n^2}$ and $z\in \mathbb{Z}$.
Let $a$ be any arbitrary point in $\mathbb{Z}^{n^2}$. Fix some $i,j$. Define $a'$ as
$a'_{i'j'}=a_{i'j'}$ if $i'\neq i$ or $j'\neq j$ and $a'_{ij}=a_{ij}+1$. So $S(a',b)=S(a,b)
+\sum_{\sigma:\sigma(i)=j}\alpha_{\sigma} (2y_{\sigma}(a,b) +1)$.
Define $a''$ in the similar way as $a'$ is defined, except here $a''_{ij} = a_{ij}-1$. Then
we get $S(a'',b)= S(a,b) -\sum_{\sigma:\sigma(i)=j}\alpha_{\sigma} (2y_{\sigma}(a,b) -1)$.
So $(S(a',b)-S(a'',b))/2 = \sum_{\sigma:\sigma(i)=j}\alpha_{\sigma} y_{\sigma}(a,b)$. Setting
$S(a',b)=S(a'',b)=0$ we have $\sum_{\sigma:\sigma(i)=j}\alpha_{\sigma}
y_{\sigma}(a,b)=0$. So $\sum_{\sigma}\alpha_{\sigma}y_{\sigma}(a,b)
= \sum_j\sum_{\sigma:\sigma(i)=j}\alpha_{\sigma}y_{\sigma}(a,b)=0$. As $a$ is arbitrarily
chosen we have $\sum_{\sigma}\alpha_{\sigma}y_{\sigma}=0$ for all $x\in \mathbb{Z}^{n^2}$
and all $z\in \mathbb{Z}$.

(Only if) This part is trivial because $S(x,z)=0.5($ $T(x,z)$ $+T(-x,-z))$ where
$T(x,z)=\sum_{\sigma}\alpha_{\sigma}$ $(y^2_{\sigma}-y_{\sigma})$.
$\Box$
\end{proof}

Let $p_{\sigma}$ be the $(n^2+1)$-dimensional vector in which the first $n^2$ entries are the vectorized $P_{\sigma}$
and the last entry is $1$. Define $\tilde{P}^{[2]}_{\sigma}=p_{\sigma}.p_{\sigma}^T$.

\begin{lemma} $\{{P}^{[2]}_{\sigma}|\sigma\in I\}$
is linearly independent if and only if $\{y_{\sigma}^2-y_{\sigma}|\sigma\in I\}$ is linearly independent.
\end{lemma}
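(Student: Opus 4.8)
The plan is to route the equivalence through the family of full matrices $\tilde{P}^{[2]}_{\sigma}=p_{\sigma}p_{\sigma}^T$ just introduced, establishing the chain $\{P^{[2]}_{\sigma}\}\text{ L.I.}\iff\{\tilde{P}^{[2]}_{\sigma}\}\text{ L.I.}\iff\{y_{\sigma}^2\}\text{ L.I.}\iff\{y_{\sigma}^2-y_{\sigma}\}\text{ L.I.}$ First I would record the identity linking the functions $y_{\sigma}$ to the matrices $\tilde{P}^{[2]}_{\sigma}$: writing $\tilde{x}=(x,z)\in\mathbb{Z}^{n^2+1}$, we have $y_{\sigma}=p_{\sigma}^T\tilde{x}$, and hence $y_{\sigma}^2=\tilde{x}^T\tilde{P}^{[2]}_{\sigma}\tilde{x}$ is a quadratic form in $\tilde{x}$.

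The rightmost equivalence is immediate from Lemma \ref{sss}: for any fixed coefficients $(\alpha_{\sigma})$, that lemma states $\sum_{\sigma}\alpha_{\sigma}(y_{\sigma}^2-y_{\sigma})\equiv 0$ iff $\sum_{\sigma}\alpha_{\sigma}y_{\sigma}^2\equiv 0$, so the two families have the same space of linear dependencies and are simultaneously linearly independent. The middle equivalence rests on the fact that a quadratic form determines its symmetric matrix: since each $\tilde{P}^{[2]}_{\sigma}$ is symmetric, $\sum_{\sigma}\alpha_{\sigma}y_{\sigma}^2=\tilde{x}^T(\sum_{\sigma}\alpha_{\sigma}\tilde{P}^{[2]}_{\sigma})\tilde{x}$ vanishes as a polynomial on the infinite grid $\mathbb{Z}^{n^2+1}$ iff the symmetric matrix $\sum_{\sigma}\alpha_{\sigma}\tilde{P}^{[2]}_{\sigma}$ is the zero matrix. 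Hence L.I. of $\{y_{\sigma}^2\}$ as functions coincides with L.I. of $\{\tilde{P}^{[2]}_{\sigma}\}$ as matrices.

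The main step is the leftmost equivalence, relating L.I. of the full matrices to L.I. of the $n^2\times n^2$ blocks. Writing $\tilde{P}^{[2]}_{\sigma}$ in block form
\[
\tilde{P}^{[2]}_{\sigma}=\begin{pmatrix} P^{[2]}_{\sigma} & P_{\sigma}\\ P_{\sigma}^T & 1\end{pmatrix},
\]
one direction is trivial: if $\sum_{\sigma}\alpha_{\sigma}\tilde{P}^{[2]}_{\sigma}=0$ then its top-left block $\sum_{\sigma}\alpha_{\sigma}P^{[2]}_{\sigma}=0$, so L.I. of the blocks forces $\alpha=0$. For the converse I would exploit the special structure of $P^{[2]}_{\sigma}$ to recover the remaining block and corner from the block alone. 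Since $(P^{[2]}_{\sigma})_{ij,ij}=(P_{\sigma})_{ij}^2=(P_{\sigma})_{ij}$, the diagonal of $\sum_{\sigma}\alpha_{\sigma}P^{[2]}_{\sigma}$ equals $\sum_{\sigma}\alpha_{\sigma}P_{\sigma}$; and since $\operatorname{tr}P^{[2]}_{\sigma}=\sum_{ij}(P_{\sigma})_{ij}=n$, its trace equals $n\sum_{\sigma}\alpha_{\sigma}$. Thus $\sum_{\sigma}\alpha_{\sigma}P^{[2]}_{\sigma}=0$ forces both $\sum_{\sigma}\alpha_{\sigma}P_{\sigma}=0$ (the off-diagonal blocks) and $\sum_{\sigma}\alpha_{\sigma}=0$ (the corner), whence the entire matrix $\sum_{\sigma}\alpha_{\sigma}\tilde{P}^{[2]}_{\sigma}$ vanishes and L.I. of the full matrices forces $\alpha=0$.

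I expect the delicate point to be exactly this converse: it is the fact that the diagonal of $P^{[2]}_{\sigma}$ reproduces $P_{\sigma}$ and its trace is the constant $n$ that makes the extra row, column, and corner of $\tilde{P}^{[2]}_{\sigma}$ redundant, so that no genuine dependence is lost when passing from the full matrices to the blocks. Everything else is bookkeeping: the polynomial-identity argument in the middle step only needs that values on an infinite set determine a degree-two polynomial, and the rightmost step is a direct appeal to Lemma \ref{sss}.
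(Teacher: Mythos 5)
Your proof is correct and follows essentially the same route as the paper: the same chain $\{P^{[2]}_{\sigma}\}$ L.I. $\iff\{\tilde{P}^{[2]}_{\sigma}\}$ L.I. $\iff\{y_{\sigma}^2\}$ L.I. $\iff\{y_{\sigma}^2-y_{\sigma}\}$ L.I., with the middle step via vanishing of the quadratic form $q^T(\sum_{\sigma}\alpha_{\sigma}\tilde{P}^{[2]}_{\sigma})q$ and the last step via Lemma \ref{sss}. Your treatment of the block step is in fact slightly more careful than the paper's one-line claim that the last row and column of $\tilde{P}^{[2]}_{\sigma}$ equal its diagonal, since you explicitly recover the corner entry $\sum_{\sigma}\alpha_{\sigma}=0$ from the trace identity $\operatorname{tr}P^{[2]}_{\sigma}=n$, a detail the paper leaves implicit.
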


\begin{proof} The $n^2\times n^2$ matrix $P^{[2]}_{\sigma}$ is a principal submatrix of
$\tilde{P}^{[2]}_{\sigma}$. The last row and the last column of $\tilde{P}^{[2]}_{\sigma}$
is the same as its diagonal. Hence $\{P^{[2]}_{\sigma}|\sigma\in I\}$
is linearly independent if and only if $\{\tilde{P}^{[2]}_{\sigma}|\sigma\in I\}$
is linearly independent.

$\sum_{\sigma\in I}\alpha_{\sigma}\tilde{P}^{[2]}_{\sigma} = 0$
if and only if
$\sum_{\sigma\in I}\alpha_{\sigma}q^T \tilde{P}^{[2]}_{\sigma}q = 0\; \forall q\in \mathbb{Q}^{n^2+1}$,
where first $n^2$ components of $q$ is $x$ and the last component is $z$. It can be rewritten as
$\sum_{\sigma \in I}\alpha_{\sigma}(P^T_{\sigma}\cdot x+z)^2 = 0\; \forall x\in \mathbb{Q}^{n^2},\, \forall z\in \mathbb{Q}$.
Writing in terms of $y_{\sigma}$, the above statement is equivalent to
$\sum_{\sigma \in I}\alpha_{\sigma}y_{\sigma}^2 = 0 \; \forall x\in \mathbb{Q}^{n^2}
\, \forall z\in \mathbb{Q}$.
From lemma \ref{sss}, this is equivalent to $\sum_{\sigma\in I}\alpha_{\sigma}(y_{\sigma}^2-y_{\sigma}) = 0 \;
\forall x\in \mathbb{Q}^{n^2},\, \forall z\in \mathbb{Q}$. $\Box$
\end{proof}

Consider the polynomial ring $A=\mathbb{Q}[\{x_{ij}|1\leq i,j\leq n\}\cup \{z\}]$.
 The subspace of $A$ generated by $\{x_{ij}|1\leq i,j\leq n\} \cup \{z\} \cup
\{x_{ij}x_{kl}|1\leq i,j,k,l\leq n\} \cup \{zx_{ij}|1 \leq i,j\leq n\}$
is the direct sum of components of degree $1$ and $2$. Its dimension is $d=1+(n^2+1)+(n^4+n^2)/2$. For $n\geq 6$,
$d\leq n!$. So the set $\{y^2_{\sigma}-y_{\sigma}| \sigma\in S_n\}$ is linearly dependent for all $n\geq 6$.

Let $J$ be a minimal set of permutations such that $\{y^2_{\sigma}-y_{\sigma}|\sigma \in J\}$ is linearly dependent.
So there exist $\alpha_{\sigma}$ such that $\sum_{\sigma\in J}\alpha_{\sigma}(y^2_{\sigma}-y_{\sigma})=0$. Since no
set of two $P^{[2]}_{\sigma}$ is L.D., the same holds for any pair of $y^2_{\sigma}-y_{\sigma}$. Hence at least
three coefficients are non-zero. Assume that $\alpha_{\sigma_1},\alpha_{\sigma_2},\alpha_{\sigma_3}$
are non-zero. Let the sign of the first two be same. We may assume that $\alpha_{\sigma_1}$ and $\alpha_{\sigma_2}$ are
negative. If not, then invert the sign of every coefficient. Note that $(-\alpha_{\sigma_1})(y^2_{\sigma_1}-y_{\sigma_1})$
is non-negative for all $x\in \mathbb{Z}^{n^2}, z\in \mathbb{Z}$. So $\sum_{\sigma\in J}\alpha_{\sigma}
(y^2_{\sigma}-y_{\sigma}) + (-\alpha_{\sigma_1})(y^2_{\sigma_1}-y_{\sigma_1})$ is non-negative for all
$x\in \mathbb{Z}^{n^2}, z\in \mathbb{Z}$. This
simplifies to $\sum_{\sigma\in J\setminus \{\sigma_1\}}\alpha_{\sigma}(y^2_{\sigma}-y_{\sigma})$ which is non-negative
for all $x\in\mathbb{Z}^{n^2},z\in \mathbb{Z}$ and $\{y^2_{\sigma}-y_{\sigma}|\sigma \in J\setminus \{\sigma_1\}\}$
is L.I. But $\alpha_{\sigma_2}$ is negative. Hence we have established that the second statement of lemma \ref{ppp}
does not hold.

\begin{theorem} Region of ${\cal F}$ satisfying conditions (\ref{gen-ineq}), properly contains ${\cal B}^{[2]}$. 
\end{theorem}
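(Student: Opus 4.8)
The plan is to deduce the theorem directly from the equivalence in Lemma \ref{ppp} together with the construction carried out just above its statement, so the proof itself is a short assembly. First I would settle the easy inclusion. Each $P^{[2]}_{\sigma}$ is a completely positive rank-one matrix attaining the objective value $n$, so it lies in ${\cal F}$; and by the discussion of Section \ref{facet} every $P^{[2]}_{\sigma}$ satisfies every integer instance of (\ref{gen-ineq}). Since both ${\cal F}$ and the half-spaces defined by (\ref{gen-ineq}) are convex, their intersection contains the convex hull of the $P^{[2]}_{\sigma}$, namely ${\cal B}^{[2]}$. Thus ${\cal B}^{[2]}$ is contained in the region named in the theorem, and it remains only to prove that the containment is strict.

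For strictness I would argue by contraposition through Lemma \ref{ppp}. That lemma states that the region of ${\cal F}$ cut out by all integer instances of (\ref{gen-ineq}) equals ${\cal B}^{[2]}$ (its statement~1) if and only if the algebraic condition (its statement~2) holds for every linearly independent family $\{P^{[2]}_{\sigma}\mid \sigma\in I\}$. The material preceding the theorem produces an explicit failure of statement~2: the dimension count $d=1+(n^2+1)+(n^4+n^2)/2\leq n!$, valid for $n\geq 6$, forces $\{y_{\sigma}^2-y_{\sigma}\mid \sigma\in S_n\}$ to be linearly dependent, and passing to a minimal dependent subfamily $J$ yields coefficients with $\sum_{\sigma\in J}\alpha_{\sigma}(y_{\sigma}^2-y_{\sigma})=0$ and at least three of them nonzero. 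Subtracting off one negative term leaves a combination that is non-negative for all integer $x,z$ over the still linearly independent family $J\setminus\{\sigma_1\}$, yet carries the negative coefficient $\alpha_{\sigma_2}$; this contradicts the ``only if'' half of statement~2.

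Hence statement~2 fails, so by Lemma \ref{ppp} statement~1 fails as well, meaning the region of ${\cal F}$ satisfying (\ref{gen-ineq}) is not equal to ${\cal B}^{[2]}$. Combined with the inclusion of the first step, this makes the containment proper, which is exactly the assertion. The genuine content of the argument is already discharged in the earlier lemmas and in the dimension-forced dependence; within the theorem the only delicate point is the sign bookkeeping — keeping $\alpha_{\sigma_1}$ and $\alpha_{\sigma_2}$ of the same (negative) sign and invoking the minimality of $J$ so that linear independence survives the deletion — and I expect no further obstacle beyond verifying that detail.
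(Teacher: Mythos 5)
Your proof is correct and takes essentially the same route as the paper: the theorem there is stated as an immediate consequence of Lemma \ref{ppp} combined with the preceding construction (the dimension count $d\leq n!$ for $n\geq 6$, the minimal dependent family $J$, and the sign argument showing statement~2 of that lemma fails), which is exactly the assembly you perform. Your only addition is spelling out the easy inclusion ${\cal B}^{[2]}$ into the region cut out by (\ref{gen-ineq}), which the paper leaves implicit.
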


\begin{corollary} There exists at least one facet of ${\cal B}^{[2]}$ which is not an instance of (\ref{gen-ineq}).
\end{corollary}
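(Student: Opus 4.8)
The plan is to obtain the corollary from the preceding theorem by a short polyhedral contradiction. I would assume that \emph{every} facet of ${\cal B}^{[2]}$ is an instance of inequality (\ref{gen-ineq}) and show that this forces the feasible region of ${\cal F}$ cut down by all instances of (\ref{gen-ineq}) to coincide with ${\cal B}^{[2]}$, contradicting the strict containment just established.

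First I would assemble two facts already in hand. By Lemma \ref{affine} the affine plane $P$ defined by (\ref{2.1})--(\ref{2.4}) is exactly the affine hull of the $P^{[2]}_{\sigma}$, so ${\cal B}^{[2]}$ is full-dimensional inside $P$ and hence equals the intersection of $P$ with the closed half-spaces of its facet-defining inequalities. Second, every point of ${\cal F}$ already satisfies (\ref{2.1})--(\ref{2.4}), since those equations were derived precisely from the maximum-objective condition defining ${\cal F}$; thus ${\cal F}\subseteq P$, and so does the region ${\cal F}'$ consisting of the points of ${\cal F}$ that satisfy every instance of (\ref{gen-ineq}).

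Now I would close the argument. Under the assumption, each facet half-space of ${\cal B}^{[2]}$ is among the instances of (\ref{gen-ineq}), so any $Y\in{\cal F}'$ satisfies the affine equations (it lies in $P$) together with all facet inequalities; by the facet description of ${\cal B}^{[2]}$ this gives $Y\in{\cal B}^{[2]}$, i.e. ${\cal F}'\subseteq{\cal B}^{[2]}$. The reverse inclusion ${\cal B}^{[2]}\subseteq{\cal F}'$ holds because ${\cal B}^{[2]}\subseteq{\cal F}$ and, as already noted, every point of ${\cal B}^{[2]}$ satisfies every instance of (\ref{gen-ineq}). Hence ${\cal F}'={\cal B}^{[2]}$, contradicting the proper containment asserted by the theorem, and therefore at least one facet of ${\cal B}^{[2]}$ is not an instance of (\ref{gen-ineq}).

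The step needing the most care is the inclusion ${\cal F}\subseteq P$, i.e. that mere membership in ${\cal F}$ already forces equations (\ref{2.1})--(\ref{2.4}); this is exactly the maximal-orthogonal-set analysis preceding Lemma \ref{affine}, and it is what lets me treat the instances of (\ref{gen-ineq}) as contributing only facet half-spaces while the affine part is supplied separately. The remaining ingredient, that a polytope equals its affine hull intersected with its facet half-spaces, is standard and is licensed here by the full-dimensionality of ${\cal B}^{[2]}$ in $P$.
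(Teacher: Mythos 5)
Your proof is correct and follows exactly the route the paper intends: the corollary is stated as an immediate consequence of the preceding theorem, with the implicit reasoning being precisely your contradiction argument (facets plus affine hull determine the polytope, ${\cal F}\subseteq P$ by the analysis before Lemma \ref{affine}, so if all facets were instances of (\ref{gen-ineq}) the cut-down region would equal ${\cal B}^{[2]}$). You have merely spelled out the standard polyhedral steps that the paper leaves tacit, which is fine.
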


\bibliographystyle{plain}
\bibliography{qap_facets}

%\newpage

\appendix
\section{Proof of Lemma \ref{cpfeas}}\label{CPFeasPrf}

\begin{proof}
%$P^{[2]}_{\sigma}$ is a completely positive rank-$1$ matrix because it is the outer product of the vectorized
%$P_{\sigma}$ with itself. If $\sigma$ is an isomorphism
%between $G_1$ and $G_2$ then $P^{[2]}_{\sigma}$ is feasible for CP-LT since it satisfies all the linear
%conditions. Conversely, if $P^{[2]}_{\sigma}$, for some $\sigma$,
%is feasible for CP-LT, then it implies that the product graph $G$ has a clique of size
%$n$ (i.e., the program returns the optimal value $n$) and hence $G_1$ and $G_2$ are isomorphic 
%with $\sigma$ as the isomorphism.

%From the above, ${\cal P}_{G_1G_2}$ is contained in the feasible region of CP-LT with objective function
%equal to $n$.
Consider a non-negative vector realization $\{u_{ij}|$ $i,j\in[n]\}\cup \{\omega\}$
for a point $Y\in{\cal L}$. Let $W$ denote an $n\times n$ matrix with $(i,j)$-th entry being $u_{ij}$.
Conditions \ref{1.2} and \ref{1.3} ensure that vectors in any row or any column of $W$ are pairwise orthogonal.
Since objective function attains value $n$, from Observation \ref{obs1} vectors of each row/column 
form a maximal set of pairwise orthogonal united vectors. Also from the same observation each row 
and each column adds up to $\omega$. Assume that the vector realization is in an $N$-dimensional space.
 Consider the $r$-th component of the matrix, i.e., the matrix formed by the $r$-th component of each vector. 
Let us denote it by $D_r$. Each element of $D_r$ is non-negative and each row and each
column adds up to $\omega_r$, the $r$-th component of $\omega$. Hence $D_r$ is $\omega_r$ times a
doubly-stochastic matrix. But the vectors of the same row (resp. column) are orthogonal so exactly one entry 
is non-zero in each row (resp. column) if $\omega_r >0$. So $D_r = \omega_rP_{\sigma_r}$ for some permutation $\sigma_r$.
We can express $W$ by $\sum_r \omega_rP_{\sigma_r}e_r$ where $e_r$ denotes the unit vector along the
$r$-th axis. $Y_{ij,kl}$ is the inner product of the vectors $u_{ij}$ and $u_{kl}$
which is $(\sum_r \omega_r(P_{\sigma_r})_{ij}e_r)\cdot (\sum_s \omega_s(P_{\sigma_s})_{kl}e_s)
= \sum_r \omega_r^2 (P_{\sigma_r})_{ij}.(P_{\sigma_r})_{kl} =
\sum_r \omega_r^2(P^{[2]}_{\sigma_r})_{ij,kl}$. Thus $Y =$ $\sum_r$ $\omega_r^2P^{[2]}_{\sigma_r}$.
Since $\sum_r\omega_r^2 = \omega^2 =1$, $Y$ is a convex combination of some of the $P^{[2]}_{\sigma}$s.
%Each $\sigma_r$, where $\omega_r >0$, is an isomorphism between $G_1$ and $G_2$. Hence $Y\in{\cal P}_{G_1G_2}$.
%So the feasible region is contained in ${\cal P}_{G_1G_2}$.
$\Box$ 
\end{proof}

\section{Proof of Lemma \ref{p2s-01}}\label{apx_p2s-01}

\begin{proof} Let $Y$ be a $0/1$ solution of the system of linear equations \ref{2.1}-\ref{2.4}.
Note that equations \ref{2.4} and the non-negativity of the entries ensure that the diagonal of
the solution is a vectorized doubly stochastic matrix. As the solution is a $0/1$ matrix, the diagonal must
be a vectorized permutation matrix, say $P_{\sigma}$. Then $Y_{ij,ij}=(P_{\sigma})_{ij}$.

Equations \ref{2.3} imply that $Y_{ij,kl}=1$ if and only if $Y_{ij,ij}=1$
and $Y_{kl,kl}=1$. Equivalently, $Y_{ij,kl}= Y_{ij,ij}. Y_{kl,kl} =
(P_{\sigma})_{ij}. (P_{\sigma})_{kl} = (P^{[2]}_{\sigma})_{ij,kl}$.

Equations \ref{2.1} and \ref{2.2} describe the remaining entries. $\Box$
\end{proof} 

\section{Proof of Lemma \ref{affine}} \label{affine-plane}

\begin{proof} We will first show that the dimension of the solution plane is no more than
$n!/(2(n-4)!) +(n-1)^2 +1$.

Split matrix $Y$ into $n^2$ non-overlapping sub-matrices of size $n\times n$
which will be called {\em blocks}. The $n$ blocks that contain the diagonal entries of $Y$ will be called
{\em diagonal blocks}. Note that $Y_{ij,kl}$ is the $jl$-th entry of the $ik$-th block.

From the equation \ref{2.2}, the off-diagonal entries of the diagonal blocks are zero. Assume that
the first $n-1$ diagonal
entries of the first $n-1$ diagonal blocks are given. Then all diagonal entries can be determined using equations
\ref{2.4}.

Consider any off diagonal block in the region above the main diagonal, other than the right most ($n$-th)
block of that row. Note that the first entry of such a block will be $Y_{r1,s1}$ where
$r < s < n$. From the equation \ref{2.2} we see that its diagonal entries are zero. The sum of the entries of
any row of this block is same as the main diagonal entry of that row in $Y$, see equation \ref{2.3}.
Same holds for the columns from symmetry condition \ref{2.1}. Hence by fixing all but one off-diagonal
entries of the first
principal sub-matrix of the block of size $(n-1)\times (n-1)$, we can fill in all the remaining entries.
An exception to above is the second-last block of the $(n-2)$-th block-row (with first entry $Y_{(n-2)1,(n-1)1}$).
Here only the upper diagonal entries of the first principal sub-matrix of size $(n-1)\times (n-1)$ are sufficient
to determine all the remaining entries of that block. From equation \ref{2.3} all the entries of the right most blocks
can be determined. Lower diagonal entries of $Y$ are determined by symmetry.
Hence we see that the number of free variables is no more than $(n-1)^2 + ((n-1)(n-2)-1)(2+\dots+(n-2)) + (n-1)(n-2)/2
= n!/(2(n-4)!) + (n-1)^2 +1$.

In \cite{KaibelPhD97} it is shown that the dimension of ${\cal B}^{[2]}$ polytope is $\frac{n!}{2(n-4)!} + (n-1)^2 +1$.
%We have also given an alternative proof of the same claim in \ref{PrfDim}.
This claim along with the result of the previous paragraph leads to the conclusion that equations \ref{2.1}-\ref{2.4}
define the affine plane spanned by the $P^{[2]}_{\sigma}$s.
$\Box$
\end{proof}

\end{document}